\newtheorem{prop}{Proposition}
\newtheorem{result}[prop]{Result}
\newtheorem{thm}[prop]{Theorem}
\newtheorem{cor}[prop]{Corollary}
\newtheorem{const}[prop]{Construction}
\title{1-Overlap Cycles for Steiner Triple Systems}
\author{Victoria Horan\thanks{\texttt{vhoran@asu.edu}} and Glenn Hurlbert\thanks{\texttt{hurlbert@asu.edu}} \\ School of Mathematics and Statistics \\ Arizona State University \\ Tempe, AZ  85287 USA}
\begin{document}
\maketitle

\begin{abstract}
	A number of applications of Steiner triple systems (e.g. disk erasure codes) exist that require a special ordering of its blocks.  Universal cycles, introduced by Chung, Diaconis, and Graham in 1992, and Gray codes are examples of listing elements of a combinatorial family in a specific manner, and Godbole invented the following generalization of these in 2010.  1-overlap cycles require a set of strings to be ordered so that the last letter of one string is the first letter of the next.  In this paper, we prove the existence of 1-overlap cycles for automorphism free Steiner triple systems of each possible order.  Since Steiner triple systems have the property that each block can be represented uniquely by a pair of points, these 1-overlap cycles can be compressed by omitting non-overlap points to produce rank two universal cycles on such designs, expanding on the results of Dewar.
\end{abstract}

\textbf{Keywords:}  Overlap cycles, universal cycles, Gray codes.

\textbf{MSC Classifications:}  68R15, 05B05.

\section{Introduction}

Steiner triple systems, or $(v,3,1)$-designs, appear in many interesting applications.  They can be viewed as set systems, combinatorial designs, hypergraphs, or many other types of structures.  A Steiner triple system of order $v$, or STS($v$), is a pair $(X, \mathcal{B})$ with $|X|=v$, where $\mathcal{B}$ is a set of triples, or blocks, from $X$.  The set $\mathcal{B}$ has the property that every pair of points from $X$ appears in exactly one triple in $\mathcal{B}$.  It has been completely determined for which values $v$ such a set system can exist.  

\begin{thm}\label{STS}
	\emph{\cite{Kirkman}}  There exists an STS($v$) if and only if $v \equiv 1, 3 \pmod 6$.
\end{thm}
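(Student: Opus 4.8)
The plan is to prove the two implications separately: first the (easy) necessity, then the (harder) sufficiency by explicit construction.

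For necessity I would assume an STS($v$) $=(X,\mathcal{B})$ exists and extract two divisibility constraints by double counting. Fixing a point $x\in X$, every one of the $v-1$ pairs $\{x,y\}$ lies in a unique block, and each block through $x$ accounts for exactly two such pairs; hence $v-1$ is even and $v$ is odd. Counting incidences of pairs globally, each of the $v(v-1)/2$ pairs sits in one block while each block supplies three pairs, so $|\mathcal{B}|=v(v-1)/6$ must be an integer, forcing $v(v-1)\equiv 0\pmod 3$ and thus $v\equiv 0$ or $1\pmod 3$. Combining $v$ odd with $v\not\equiv 2\pmod 3$ yields exactly $v\equiv 1,3\pmod 6$.

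For sufficiency I would split into the two residue classes and build a system directly in each, via the Bose and Skolem constructions. When $v=6t+3$, I would write the point set as $Q\times\{1,2,3\}$ where $|Q|=2t+1$ carries an idempotent commutative quasigroup operation $\circ$ (available since $Q$ has odd order, e.g.\ $x\circ y=(t+1)(x+y)\bmod(2t+1)$ on $\mathbb{Z}_{2t+1}$, which is idempotent because $2t+2\equiv 1$). The blocks are the columns $\{(x,1),(x,2),(x,3)\}$ together with $\{(x,i),(y,i),(x\circ y,i+1)\}$ for $x\neq y$ and $i\in\mathbb{Z}_3$; I would then verify that every pair is covered exactly once by checking the three cases of second coordinates (equal, consecutive, and differing by two). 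When $v=6t+1$, I would adjoin an infinity point to $Q\times\{1,2,3\}$ with $|Q|=2t$ and use a half-idempotent commutative quasigroup, whose existence reduces to producing a Skolem- (or hooked-Skolem-) type sequence; the infinity point absorbs the pairs left uncovered by the even-order defect.

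The main obstacle is the sufficiency, and within it the $v\equiv 1\pmod 6$ case: constructing a commutative quasigroup of even order with the prescribed diagonal behaviour is precisely what Skolem sequences supply, and assembling the blocks so that each pair — including every pair through $\infty$ — is covered exactly once requires careful bookkeeping. By contrast, the idempotent commutative quasigroup needed for the $v\equiv 3$ case is elementary, so the odd-modulo-six subcase is essentially routine once the construction is written down.
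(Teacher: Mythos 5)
The paper does not actually prove Theorem \ref{STS}: it is stated with a citation to Kirkman, and the only related internal material is the pair of direct constructions reproduced in Section 3. Measured against that, your plan is the standard proof and matches the paper's toolkit exactly. Your necessity argument (two double counts: each point lies on $(v-1)/2$ blocks, forcing $v$ odd, and $|\mathcal{B}|=v(v-1)/6\in\mathbb{Z}$, forcing $v\equiv 0,1\pmod 3$) is complete and correct. Your $v=6t+3$ case is precisely Construction \ref{3m6} (Bose): your quasigroup $x\circ y=(t+1)(x+y)\bmod(2t+1)$ is idempotent since $2(t+1)\equiv 1\pmod{2t+1}$, and it is the paper's condition $i+j=2k$ in disguise, as $k=(t+1)(i+j)$. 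Your $v=6t+1$ plan is Construction \ref{1m6} (Skolem).

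One correction, which also removes what you identify as the main obstacle: the half-idempotent commutative quasigroup of order $2t$ does not require Skolem or hooked Skolem sequences. It has a one-line explicit definition, $x\circ y=\pi(x+y\bmod 2t)$ with $\pi(z)=z/2$ for $z$ even and $\pi(z)=(z+2t-1)/2$ for $z$ odd, which is exactly the formula appearing in Construction \ref{1m6}; Skolem sequences belong to the alternative difference-method proof of the $v\equiv 1\pmod 6$ case and are not needed on this route. What your sketch still owes for that case is the block list and the bookkeeping you defer: only the columns $A_x=\{(x,0),(x,1),(x,2)\}$ with $0\le x\le t-1$ are blocks, the blocks through $\infty$ are $C_{x,i}=\{\infty,(x+t,i),(x,i+1)\}$, and one must check that the pairs $\{(x,i),(x,i+1)\}$ with $x\ge t$ (the ``even-order defect'') are absorbed by type-(2) blocks via half-idempotency, while each pair through $\infty$ is covered exactly once by the $C_{x,i}$. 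These verifications are routine, so with them written out your argument is a correct and complete proof of the theorem the paper merely cites.
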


Several authors have considered ordering blocks in Steiner triple systems in specific ways.  For example, in \cite{Disks}, the authors consider such an ordering to construct erasure codes.  In \cite{Dewar}, Dewar uses a modified universal cycle structure to list the blocks and points within each block of these designs in an organized manner.  \textbf{Universal cycles}, or \textbf{ucycles}, are a type of cyclic Gray code in which a string $a_1a_2 \ldots a_n$ may follow string $b_1 b_2 \ldots b_n$ if and only if $a_{i+1} = b_i$ for all $i \in \{1,2, \ldots , n-1\}$.  That is, the two substrings $a_2 a_3 \ldots a_n$ and $b_1 b_2 \ldots b_{n-1}$ are identical \cite{CDG}.  We can think of this as an $n-1$ overlap between the strings.  

Because any two blocks in a Steiner triple system can share at most one point in common, finding a ucycle over the blocks of an STS is clearly impossible - they would need to overlap in two points.  To remedy this problem, Dewar introduces a modified ucycle structure.  A \textbf{rank two universal cycle} is a ucycle on a block design in which each block is represented by just two of its elements.  Since any pair of points appears in exactly one block of an STS, they completely identify a unique block in the triple system.  Dewar constructs rank two ucycles for the special class of cyclic Steiner triple systems.  A \textbf{cyclic} design has automorphism group containing the cyclic group of order $v$, isomorphic to $\mathbb{Z}_v$, as a subgroup.  Since this subgroup contains the automorphism $\pi: i \mapsto i+1 \pmod v$, this implies that we can partition the blocks into classes so that within one class each block can be obtained from any other by repeated applications of $\pi$ on the block elements.

\begin{thm}\label{Dewar}
	\emph{(\cite{Dewar}, p. 200)}  Every cyclic STS$(v)$ with $v \neq 3$ admits a ucycle of rank two.
\end{thm}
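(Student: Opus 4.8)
The plan is to reinterpret a rank two ucycle as an Eulerian circuit. Since every pair of points of $\mathbb{Z}_v$ lies in a unique block, choosing one of the three pairs of each block---equivalently, one edge of the triangle spanned by its three points---yields a simple graph $G$ on the vertex set $\mathbb{Z}_v$ with exactly one edge per block. Reading the vertices of an Eulerian circuit of $G$ lists the blocks so that each representation shares its last point with the first point of the next, which is exactly a rank two ucycle; conversely a rank two ucycle determines such a circuit. Hence it suffices to select one edge from each block so that $G$ is connected and all degrees are even.

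I would organize the selection along the orbits of the automorphism $\pi\colon i\mapsto i+1$. For $v\equiv 1\pmod 6$ every orbit is full, generated by a base block, and representing an orbit by a single difference $d$ (the same side of every block) contributes the circulant $\{\,\{x,x+d\}: x\in\mathbb{Z}_v\,\}$, which is $2$-regular because $v$ is odd. Each full orbit then adds $2$ to every degree, so all degrees equal the even number $2k$ when $v=6k+1$. Since the base-block differences partition $\mathbb{Z}_v\setminus\{0\}$ into pairs $\{d,-d\}$, the difference $1$ occurs in some orbit; representing that orbit by difference $1$ contributes a Hamiltonian cycle and makes $G$ connected. This settles $v\equiv 1\pmod 6$ at once.

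The case $v\equiv 3\pmod 6$ is where the difficulty lies, and the short orbit is the main obstacle. Writing $v=6k+3$, there is a unique short orbit consisting of the triples $T_j=\{j,\,j+v/3,\,j+2v/3\}$, which partition $\mathbb{Z}_v$; since every side of $T_j$ is a difference-$(v/3)$ pair, any selection represents each $T_j$ by an edge internal to it, contributing degree $1$ to exactly two of its three vertices. Thus the short orbit forces odd degree at $4k+2$ vertices, a defect that no choice internal to the short orbit can remove. (Here $v\neq 3$ together with the nonexistence of a cyclic $\mathrm{STS}(9)$ forces $v\ge 15$, so $k\ge 2$, two or more full orbits are present, and $1\neq v/3$ still lies in a full orbit.) The plan is to absorb this defect with the spare freedom in the full orbits: let $T$ be the set of $4k+2$ vertices of wrong parity, and switch the selected side of certain full-orbit blocks, each switch toggling the degree parity of exactly the two endpoints of the affected pair. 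A family of switches realizing a $T$-join then restores even degree everywhere.

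The delicate point is to carry out this parity correction without destroying connectivity and without demanding two incompatible switches in one block. I would choose the correction edges from full orbits other than the difference-$1$ orbit, so that its Hamiltonian cycle survives and connectivity is automatic, and I would arrange the $T$-join to use at most one side per triangle, which is possible because the available (unselected) sides form a dense, connected graph on $\mathbb{Z}_v$ and $|T|$ is even. Verifying that such a $T$-join always exists under these two restrictions---one side per block and disjoint from the difference-$1$ orbit---is the crux of the argument; once it is in hand, $G$ is connected with all even degrees, its Eulerian circuit is a rank two ucycle, and the proof is complete.
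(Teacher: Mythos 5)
First, a framing note: the paper does not prove this theorem at all --- it is imported verbatim from Dewar's thesis and used as a black box --- so your proposal can only be judged on its own terms. Your reformulation is sound and is in the same difference-method spirit as Dewar's construction: since each pair of points lies in a unique block, choosing one representing pair per block gives a simple graph with one edge per block, and a rank two ucycle is precisely an Eulerian circuit in that graph. Your $v \equiv 1 \pmod 6$ case is genuinely complete: all block orbits are full, a uniform difference choice per orbit contributes a $2$-regular circulant, every degree is $2k$ for $v = 6k+1$, and representing the orbit containing difference $1$ by that difference yields a spanning cycle, so the graph is connected with all degrees even.

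The gap is exactly where you say it is, and it is a real one: for $v \equiv 3 \pmod 6$ you reduce the problem to a constrained $T$-join and then declare its existence to be ``the crux'' without proving it. The heuristic you offer --- that the unselected sides form a dense connected graph and $|T|$ is even --- does not close it, because your constraints are not cosmetic. Since each pair lies in a unique block, a parity toggle at a given pair can be purchased only by switching that one specific block; moreover a switch from the chosen side toggles the endpoints of the \emph{third} side, so each block offers exactly two prescribed toggle pairs and admits at most one switch. A $T$-join obtained from abstract connectivity may well use two sides of a single triangle, and the natural repair --- replace them by the third side, which has the same parity effect --- demands toggling the endpoints of the \emph{chosen} side, which no single switch realizes. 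So the ``one side per block, avoid the difference-$1$ orbit'' $T$-join needs an explicit construction, not an appeal to density. A workable route uses freedom you already noticed but did not exploit: choose which vertex of each short triangle $\{j, j+v/3, j+2v/3\}$ is hidden so that $T$ itself decomposes into pairs of one fixed difference $d$ with $\gcd(d,v)=1$, where $d$ belongs to a full orbit other than the difference-$1$ orbit and is not that orbit's chosen difference; taking the hidden set to be every third vertex along the $d$-cycle achieves this when $3 \nmid v/3$, since distinct difference-$d$ pairs automatically lie in distinct blocks, but the case $9 \mid v$ (and the verification that a suitable $d$ exists for small $v$ such as $v=15$) requires separate treatment. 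As written, the $v \equiv 3 \pmod 6$ half of your argument is an unproven plan rather than a proof.
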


While this result allows us to write the list of blocks as a modified ucycle, it is not easy to recover the design from a given ucycle.  Given just two points of a block, the only way to recover the missing point is to have a lookup table at hand, which (depending on applications) may defeat the purpose of creating a compact listing.  For this reason, we consider overlap cycles.

Overlap cycles were first introduced in \cite{Godbole} for binary and $m$-ary strings.  To extend this concept to Steiner triple systems, let $(X, \mathcal{B})$ be an STS($v$).  An \textbf{$s$-overlap cycle} (or $s$-\textbf{ocycle}) on $(X, \mathcal{B})$ is an ordered listing of the blocks in $\mathcal{B}$ so that the last $s$ points in one block are the first $s$ points of its successor in the listing.  In the case of triple systems, a 2-ocycle is a ucycle, which as previously discussed cannot be formed on any STS.  Hence we consider 1-ocycles.  

When writing out ocycles, we can list the sequence fully or we can choose to omit points that do not appear as an overlap (\textbf{hidden} points).  When we omit the hidden points, we say that the cycle is written in \textbf{compressed form}.  Using this concept, we can view Dewar's rank two ucycles as compressed 1-ocycles and easily obtain the following corollary to Theorem \ref{Dewar}.

\begin{cor}\label{CDewar}
	Every cyclic STS$(v)$ with $v \neq 3$ admits a 1-ocycle.
\end{cor}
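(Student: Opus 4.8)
The plan is to exhibit an explicit correspondence between rank two ucycles and 1-ocycles and then invoke Theorem \ref{Dewar} directly. The key observation is that a rank two ucycle on the blocks of an STS($v$) is, by definition, a cyclic sequence of points $o_1 o_2 \cdots o_k$ (with $k = v(v-1)/6$ the number of blocks) in which each cyclically consecutive pair $\{o_i, o_{i+1}\}$ represents a distinct block and every block is represented exactly once. I would argue that such a sequence is precisely a 1-ocycle written in compressed form, so that ``uncompressing'' it produces the desired 1-ocycle.

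First I would make the uncompression map precise. Given the rank two ucycle $o_1 o_2 \cdots o_k$, for each index $i$ let $y_i$ be the third point of the unique block containing $\{o_i, o_{i+1}\}$; this point exists and is unique because in an STS every pair of points lies in exactly one block. Setting $B_i = (o_i, y_i, o_{i+1})$ yields an ordered listing of triples in which the last point of $B_i$ equals $o_{i+1}$, the first point of $B_{i+1}$, so consecutive triples overlap in their single shared point and the defining condition of a 1-ocycle is met. Because the pairs $\{o_i, o_{i+1}\}$ run over all blocks exactly once around the cycle, the triples $B_i$ are exactly the blocks of $\mathcal{B}$, each listed once; the inserted middle points $y_i$ are the hidden points, and deleting them recovers the original rank two ucycle, confirming that the two objects differ only by compression.

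The corollary then follows immediately: Theorem \ref{Dewar} supplies a rank two ucycle for every cyclic STS($v$) with $v \neq 3$, and the uncompression map converts it into a 1-ocycle. I do not expect any real obstacle, since this is essentially a repackaging of Dewar's result; the only point requiring a little care is verifying that the uncompression is well defined and block-preserving, which reduces entirely to the defining property that each pair of points of a Steiner triple system determines a unique block.
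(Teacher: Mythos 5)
Your proposal is correct and takes essentially the same route as the paper: the authors obtain Corollary \ref{CDewar} precisely by observing that Dewar's rank two ucycles are 1-ocycles written in compressed form, and your uncompression map (inserting the third point of the unique block determined by each consecutive pair $\{o_i,o_{i+1}\}$) is exactly that observation made explicit. The paper gives no more detail than you do, so your verification that the map is well defined and block-preserving is, if anything, slightly more careful than the original.
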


It is a well-known result that there exists a cyclic STS($v$) for every $v \equiv 1, 3 \pmod 6$ except $v = 9$ (See \cite{TripleSystems}, Theorem 7.3).  In order to further differentiate our results from Dewar's, we will consider a different class of Steiner triple systems, namely  automorphism free (AF) Steiner triple systems, and prove the following result using recursive constructions.  However, the constructions used herein may be utilized with various base cases for different (and perhaps not automorphism free) Steiner triple systems.

\begin{result}\label{main}
	For every $v \equiv 1, 3 \pmod 6$ with $v \geq 15$, there exists an AF STS$(v)$ with a 1-ocycle.
\end{result}

We also include two other direct constructions of 1-ocycles for a Steiner triple system of each order (Results \ref{OC3m6} and \ref{OC1m6}), as well as another recursive construction (Result \ref{OCPC}).  While Dewar constructs rank two ucycles for all cyclic designs of each order, these direct constructions may be a simpler method of finding an ocycle when any STS($v$) will do.

In this paper, we will begin with a review of some recursive constructions of automorphism free Steiner triple systems in Section 2 and show 1-ocycle constructions that correspond.  Some of the larger but necessary base cases for these constructions may be found in the appendix.  Section 3 discusses similar results for other STS constructions and their corresponding 1-ocycle constructions.  As a future direction, it would be interesting to consider these structures over other types of designs, such as Steiner quadruple systems (see \cite{OCSQS}).

\section{Constructions of AF Steiner Triple Systems and 1-Ocycles}


\subsection{Recursive Constructions of AF Steiner Triple Systems}

The first construction produces an AF STS($2v+1$) from an AF STS($v$).

\begin{const}\label{C2v+1}
	Given $(X, \mathcal{A})$, an STS($v$) with $v \geq 15$ and with $X$ identified with $\mathbb{Z}_v$, construct a new design $(Y, \mathcal{B})$ with points: $$Y = (\mathbb{Z}_2 \times \mathbb{Z}_v )\cup\{\infty\} $$ and blocks:
	\begin{enumerate}
		\item  $\{(1,a), (1,b), (1,c)\}$ with $\{a,b,c\} \in \mathcal{A}$,
		\item  $\left\{ (0,x), (0,y), \left(1, \frac{x+y}{2} \right) \right\}$ with $\{x,y\} \subset X$, and
		\item  $\{(0,x), (1,x), \infty\}$ with $x \in X$.
	\end{enumerate}
\end{const}

The following theorem from \cite{AFSTS} proves that Construction \ref{C2v+1} is correct.

\begin{thm}\label{2v+1}
	\emph{\cite{AFSTS}}.  If $(X, \mathcal{A})$ is an STS($v$), then $(Y, \mathcal{B})$ is an STS($2v+1$).  In particular, if $(X, \mathcal{A})$ is AF, then $(Y, \mathcal{B})$ is AF.  
\end{thm}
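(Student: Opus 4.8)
The plan is to prove the two assertions in turn, handling the design-theoretic verification first and the rigidity (automorphism-free) statement second. For the first claim, since $|Y| = |\mathbb{Z}_2\times\mathbb{Z}_v| + 1 = 2v+1$, it suffices to check that every unordered pair of points of $Y$ lies in exactly one block, which I would do by a short case analysis on the types of the two points. A pair of $1$-layer points $\{(1,a),(1,b)\}$ is covered uniquely by the Type~1 block coming from the unique block of $\mathcal{A}$ on $\{a,b\}$; a pair of $0$-layer points $\{(0,x),(0,y)\}$ is covered uniquely by the Type~2 block on them (here I would note that $v$ is odd, so $\frac{x+y}{2}$ is well defined in $\mathbb{Z}_v$); a mixed pair $\{(0,x),(1,y)\}$ is covered by the Type~2 block $\{(0,x),(0,2y-x),(1,y)\}$ when $x\neq y$ and by the Type~3 block when $x=y$; and each pair involving $\infty$ is covered by the unique Type~3 block through the corresponding $0$- or $1$-layer point. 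In each case no block of another type can repeat the pair, and a block count $\frac{v(v-1)}{6} + \frac{v(v-1)}{2} + v = \frac{v(2v+1)}{3}$ confirms that every pair is used exactly once.

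For the automorphism-free claim, let $\sigma$ be an automorphism of $(Y,\mathcal{B})$. The endgame is clean once three reductions are in place. First, I would observe that the $1$-layer $W=\{(1,a):a\in X\}$ together with the Type~1 blocks is a sub-STS$(v)$ isomorphic to $(X,\mathcal{A})$, and that Type~1 blocks are the \emph{only} blocks lying entirely inside $W$ (every Type~2 and Type~3 block leaves $W$). Granting that $\sigma(W)=W$ (the one nontrivial input, discussed below), the restriction $\sigma|_W$ is then an automorphism of $(X,\mathcal{A})$, and since $(X,\mathcal{A})$ is AF, $\sigma$ fixes every point of $W$. Second, $\sigma$ must fix $\infty$: if instead $\sigma(\infty)=(0,c)$, then applying $\sigma$ to the Type~3 blocks $\{(0,a),(1,a),\infty\}$ and using that each $(1,a)$ is fixed forces $\sigma((0,a))=(0,2a-c)$ for every $a\neq c$; substituting this into any Type~2 block $\{(0,x),(0,y),(1,\frac{x+y}{2})\}$ with $x,y\neq c$ and $\frac{x+y}{2}\neq c$ produces an image whose third coordinate $x+y-c$ is not the required midpoint $\frac{x+y}{2}$, so the image is not a block, a contradiction. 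This is precisely where the midpoint rule $\frac{x+y}{2}$ makes the construction rigid, so $\sigma(\infty)=\infty$. Third, with $\infty$ and all of $W$ now fixed, each Type~3 block $\{(0,x),(1,x),\infty\}$ is fixed setwise, whence $\sigma$ fixes every $(0,x)$ as well, and therefore $\sigma$ is the identity.

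The main obstacle is the suppressed step $\sigma(W)=W$, that is, showing the automorphism must respect the layering rather than merely permuting points within an abstract STS$(2v+1)$. My plan is to characterize $W$ intrinsically, and hence invariantly, as the union of all sub-STS$(v)$ of $(Y,\mathcal{B})$, by proving that $W$ is the \emph{unique} subsystem of order $v$. Here one uses that $v$ is the maximal possible order of a proper subsystem of an STS$(2v+1)$, so any competing order-$v$ subsystem $W'$ is maximal; analyzing the blocks of $W'$ through one of its external points (a point of $\{\infty\}\cup(0\text{-layer})$, all of whose blocks are of Type~2 or~3) would force a highly restrictive substructure inside $(X,\mathcal{A})$, e.g.\ a sub-STS of order $\frac{v-1}{2}$ that is in addition closed under the midpoint map $x,y\mapsto\frac{x+y}{2}$. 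I expect ruling this out to be the crux of the entire argument, and it is exactly where both the automorphism-free hypothesis on $(X,\mathcal{A})$ and the bound $v\geq 15$ should be essential. An alternative to establishing uniqueness outright would be to separate the three point-classes $\{\infty\}$, the $0$-layer, and the $1$-layer by a direct automorphism-invariant statistic, such as the number of blocks through a point that lie in some sub-STS$(v)$, but this ultimately reduces to the same subsystem analysis.
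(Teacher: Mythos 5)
The paper itself offers no proof of this theorem: it is imported wholesale from Lindner and Rosa \cite{AFSTS} ("The following theorem from \cite{AFSTS} proves that Construction \ref{C2v+1} is correct"), so your proposal must stand on its own, and its two halves fare very differently. The verification that $(Y,\mathcal{B})$ is an STS($2v+1$) is complete and correct: oddness of $v$ makes $\frac{x+y}{2}$ well defined, the mixed pair $\{(0,x),(1,y)\}$ with $x \neq y$ is caught by the unique type-2 block with second $0$-point $(0,2y-x)$, the pair $\{(0,x),(1,x)\}$ by the type-3 block, and your block count agrees with $\frac{(2v+1)(2v)}{6}$. The conditional skeleton of the AF half is also sound: granting $\sigma(W)=W$ for the $1$-layer $W$, automorphism-freeness of $(X,\mathcal{A})$ fixes $W$ pointwise; your computation ruling out $\sigma(\infty)=(0,c)$ is right (the type-3 blocks force $\sigma((0,a))=(0,2a-c)$, and then a type-2 block with $x,y\neq c$ and $\frac{x+y}{2}\neq c$ maps to a triple whose third coordinate would have to be $x+y-c \neq \frac{x+y}{2}$); and the type-3 blocks then pin the $0$-layer.

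The genuine gap is exactly the step you flag and then leave as a plan: $\sigma(W)=W$ is never proved, and with it the entire rigidity claim. Your proposed route---show $W$ is the unique sub-STS($v$)---is incomplete in two concrete ways. First, you analyze only competing subsystems through $\infty$; these do reduce, as you say, to a set $T \subseteq \mathbb{Z}_v$ of size $\frac{v-1}{2}$ carrying a subsystem of $\mathcal{A}$ and closed under midpoints (and, from the mixed pairs $\{(0,x),(1,y)\}$, also under reflections $x,y \mapsto 2y-x$, which forces $T$ to contain the whole coset $x + \langle y-x\rangle$ for each pair; for prime $v$ this kills $T$ instantly, and elementary coset counting appears to handle composite $v$---notably with no use of the AF hypothesis). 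But a sub-STS($v$) avoiding $\infty$ and distinct from $W$ is a second case you never address: it must contain $0$-layer points, pair-closure forces its index sets to have sizes $\frac{v+1}{2}$ and $\frac{v-1}{2}$, and one then needs a sumset-type argument that the midpoints of the $0$-layer part cannot fit in the $1$-layer part. Second, your stated expectation that the AF hypothesis on $(X,\mathcal{A})$ and the bound $v \geq 15$ are "essential" precisely here is unsubstantiated and probably misplaced---in your own architecture AF is consumed only afterwards, to trivialize $\sigma|_W$. So as written you have a correct proof of the first sentence of the theorem and an honest outline, not a proof, of the second; the crux is acknowledged but not closed.
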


The second construction produces an AF STS($2v+7$) from an AF STS($v$).

\begin{const}\label{C2v+7}
	Given $(X, \mathcal{A})$, an STS($v$) with $v \geq 15$, and with $X$ identified with $\mathbb{Z}_v$, construct a new design $(Y, \mathcal{B})$ with points: $$Y = (\mathbb{Z}_2 \times \mathbb{Z}_v) \cup \{ \infty_i \mid |i| \leq 3\}.$$ Fix $(Z, \mathcal{C})$ as some STS(7) on the points $\{-3, -2, -1, 0, 1, 2, 3\}$.  The blocks in our new design are as follows:
	\begin{enumerate}
		\item $\{(1,i), (1,j), (1,k)\}$ with $\{i,j,k\} \in \mathcal{A}$,
		\item  $\{ \infty_i , \infty_j , \infty_k\}$ with $\{i,j,k\} \in \mathcal{C}$,
		\item  $\{(0,x), (0, x+2), (0,x+6)\}$ with $x \in \mathbb{Z}_v$,
		\item  $\{(0,x), (1,x+y), (0, x+2y)\}$ with $\{x, y\} \subset \mathbb{Z}_v$ and $|y|>3$, and
		\item  $\{\infty_i, (1,j), (0, i+j)\}$ with $|i| \leq 3$ and $j \in \mathbb{Z}_v$.
	\end{enumerate}
\end{const}

The following theorem from \cite{AFSTS} proves that Construction \ref{C2v+7} is correct.

\begin{thm}\label{2v+7}
	\emph{\cite{AFSTS}}.  If $(X, \mathcal{A})$ is an STS($v$), then $(Y, \mathcal{B})$ is an STS($2v+7$).  In particular, if $(X, \mathcal{A})$ is AF, then $(Y, \mathcal{B})$ is AF.  
\end{thm}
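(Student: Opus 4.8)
The statement bundles two claims, that $(Y,\mathcal B)$ is an STS$(2v+7)$ and that it inherits rigidity from $(X,\mathcal A)$, and I would treat them separately. Throughout, write $L_0=\{0\}\times\mathbb Z_v$, $L_1=\{1\}\times\mathbb Z_v$, and $I=\{\infty_i\mid |i|\le 3\}$ for the three classes of points, so that $|Y|=2v+7$. For the design axiom the plan is to show that every one of the $\frac{(2v+7)(2v+6)}{2}$ pairs of $Y$ lies in at least one block, after first tallying that the five families contribute $v(v-1)/6$, $7$, $v$, $v(v-7)/2$, and $7v$ blocks respectively, for a total of $\frac{(2v+7)(v+3)}{3}$. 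Since each block covers three pairs and this total equals the number of pairs divided by three, ``every pair at least once'' upgrades automatically to ``exactly once,'' so it suffices to check coverage.

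I would split the verification into the six pair-types determined by the classes. Pairs inside $L_1$ and inside $I$ are covered exactly once by the copies of $\mathcal A$ (type 1) and $\mathcal C$ (type 2); each pair $\{(1,a),\infty_i\}$ and $\{(0,m),\infty_i\}$ lies in a unique type-5 block, recovered by solving $j=a$, respectively $j=m-i$. The two genuinely design-theoretic cases are the $L_0$--$L_0$ and $L_1$--$L_0$ pairs. For $\{(0,m),(0,n)\}$ I would use that $v$ is odd, so the half-difference $y=(m-n)/2$ is well defined up to sign; type 3 realizes exactly the half-differences with $|y|\in\{1,2,3\}$ and the type-4 blocks exactly those with $|y|>3$, and $v\ge 15$ keeps $\{1,2,3\}$ disjoint from its negatives and from the larger residues, so each pair is hit once. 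For $\{(1,a),(0,b)\}$ the split is by $|b-a|$: if $|b-a|\le 3$ the unique cover is the type-5 block with $i=b-a$, $j=a$; if $|b-a|>3$ it is the single type-4 block $\{(0,b),(1,a),(0,2a-b)\}$, whose two natural parametrizations coincide. This settles the STS claim.

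For rigidity let $\sigma$ be an automorphism of $(Y,\mathcal B)$. The crux, and the step I expect to be the main obstacle, is to show that $\sigma$ preserves each of $L_0$, $L_1$, and $I$ setwise. A plain replication count is useless here, since every point of $Y$ lies in exactly $v+3$ blocks; one must instead produce a genuinely structural automorphism-invariant that separates the classes, for instance the isomorphism type of the link of a point, namely how the $v+3$ blocks through it partition the remaining $2v+6$ points into matched pairs, together with how those pairs sit in further blocks. The delicate point is that this invariant must distinguish the sub-STS$(7)$ carried by $I$ from any sub-STS$(7)$ that might accidentally occur inside the $L_1$-copy of $\mathcal A$, since the hypothesis assumes nothing beyond rigidity of the base.

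Granting class-invariance, the remainder is forced. The type-1 blocks are precisely the blocks contained in $L_1$, so $\sigma|_{L_1}$ induces an automorphism of $(X,\mathcal A)$; rigidity of the base makes $\sigma$ the identity on $L_1$. Writing $\sigma(\infty_i)=\infty_{\tau(i)}$ and applying $\sigma$ to the type-5 block $\{\infty_i,(1,j),(0,i+j)\}$, the image is again a type-5 block through the fixed point $(1,j)$, whence $\sigma(0,i+j)=(0,\tau(i)+j)$; letting $j$ vary and comparing across different $i$ forces $\tau(i)-i$ to be a single constant $c$ and $\sigma(0,k)=(0,k+c)$ for all $k$. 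Finally I would apply $\sigma$ to a type-4 block and use that its $L_1$-coordinate is the midpoint of its two $L_0$-coordinates: the translated image is again of type 4 only when $c=0$. Hence $\tau=\mathrm{id}$ and $\sigma$ fixes $L_0$ and $I$ pointwise as well, so $\sigma$ is the identity and $(Y,\mathcal B)$ is AF.
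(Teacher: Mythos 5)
The first thing to note is that the paper does not prove this theorem at all: it is quoted verbatim from Lindner and Rosa \cite{AFSTS}, so there is no in-paper argument to match yours against, and your attempt must be judged on its own merits. The STS half of your proposal is correct and essentially complete. Your block tally $\frac{v(v-1)}{6}+7+v+\frac{v(v-7)}{2}+7v=\frac{(2v+7)(v+3)}{3}$ checks out, the coverage analysis is right in every case (in particular the observation that for odd $v$ the half-difference $y$ is well defined up to sign, with $|y|\le 3$ handled by type-3 blocks and $|y|>3$ by type-4 blocks, and that the two parametrizations of the type-4 block through a mixed pair $\{(1,a),(0,b)\}$ with $|b-a|>3$ coincide), and the double-counting upgrade from ``at least once'' to ``exactly once'' is legitimate. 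Likewise, the endgame of your rigidity argument is sound: blocks inside $L_1$ are exactly the type-1 blocks, so rigidity of $\mathcal{A}$ fixes $L_1$ pointwise; the type-5 blocks then force $\sigma(0,k)=(0,k+c)$ with $\tau(i)=i+c$ for all seven indices; and $c=0$ follows either from your type-4 midpoint relation or even more simply from summing $\tau(i)-i$ over the permutation $\tau$ of $\{-3,\dots,3\}$.

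The genuine gap is the step you yourself flag and then skip: you never prove that an arbitrary automorphism $\sigma$ preserves the classes $L_0$, $L_1$, $I$ setwise, but everything after ``Granting class-invariance'' depends on it. You correctly diagnose that replication counting is useless (every point has replication $v+3$), but merely naming ``the isomorphism type of the link of a point'' as a candidate invariant is not an argument: you give no computation showing the links of points in the three classes actually differ, and as you note, the hypothesis gives no control over accidental sub-STS(7)'s inside the $L_1$ copy of $\mathcal{A}$, so $I$ cannot be characterized simply as a subsystem of order $7$. One must also rule out automorphisms that scatter $I$ across $L_0\cup L_1$ without carrying its block structure intact, which your sketch does not address. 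This class-invariance is precisely the hard content of the theorem as proved by Lindner and Rosa, whose argument is a detailed structural analysis rather than a one-line invariant. As it stands, your proposal fully proves the first assertion (that $(Y,\mathcal{B})$ is an STS$(2v+7)$) but reduces the second (inheritance of automorphism-freeness) to an unproved lemma that is the crux of the cited result.
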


\subsection{Base Cases}\label{BaseC}

The recursive constructions given in the previous subsection require six base cases in order to construct recursively an AF STS($v$) for every $v \equiv 1, 3 \pmod 6$ with $v \geq 15$.  These base cases are STS($v$)'s for $v = 15, 19, 21,25,27,33$.  We also provide 1-ocycles for a non-cyclic STS($v$) for $v = 9,13$.  We include a 1-ocycle for the cyclic STS(7) as it is used in the second recursive construction.  See the appendix for all cases with $v \geq 19$.
\\
\\
$\mathbf{v=7:}$ We use the cyclic $(7,3,1)$-design and produce the 1-ocycle: $$\underline{2},1, \underline{0}, 3,
\underline{4}, 2,
\underline{5}, 0,
\underline{6},4,\underline{1},5,\underline{3},6,\underline{2}$$ or, since each
pair appears in exactly one block, we may omit the non-overlap points to write it in compressed form as:
$$(2,0,4,5,6,1,3,2).$$
  $\mathbf{v=9:}$  We use the non-cyclic design (from \cite{SmallSTS}) and produce the 1-ocycle: 
 $$\underline{0},1, \underline{2}, 8,
\underline{5}, 3,
\underline{4}, 1,
\underline{7},8,\underline{6},3,\underline{0},4,\underline{8},1,\underline{3},7,
\underline{2},4,\underline{6},1,\underline{5},7,\underline{0}$$ or in compressed form:
$$(0,2,5,4,7,6,4,8,3,2,6,5).$$
\\
\\
  $\mathbf{v=13:}$  We use the non-cyclic design (from \cite{AFSTS}) and produce the 1-ocycle:
  $$\underline{1}, 2, \underline{0}, 9, \underline{10}, 12, \underline{1}, 3, \underline{5}, 7, \underline{11}, 9, \underline{6}, 7, \underline{12}, 8, \underline{4}, 9, \underline{5}, 10, \underline{8}, 6, \underline{3}, 11, \underline{10}, 7, \underline{4},$$
		$$\underline{4}, 0, \underline{3}, 7, \underline{2}, 5, \underline{12}, 3, \underline{9}, 8, \underline{2}, 10, \underline{6}, 5, \underline{0}, 12, \underline{11}, 2, \underline{4}, 6, \underline{1}, 9, \underline{7}, 0, \underline{8}, 11, \underline{1}.$$
\\
\\
$\mathbf{v=15:}$  We use the AF design (from \cite{SmallSTS}) and produce the 1-ocycle:
$$\begin{array}{c|c|c|c|c}
		210 & 807 & 5b7 & da4 & b94 \\
		0a9 & 73c & 742 & 4e5 & 48c \\
		971 & ce1 & 2dc & 52a & c95 \\
		153 & 1db & cb0 & a7e & 58d \\
		304 & b82 & 0de & eb6 & d76 \\
		461 & 236 & e83 & 6ca & 689 \\
		1a8 & 605 & 39d & a3b & 9e2
	\end{array}$$
	
\subsection{Recursive Constructions of 1-Overlap Cycles}

\begin{result}\label{OC2v+1}
	If there exists an AF STS$(v)$ with a 1-ocycle, then there exists an AF STS$(2v+1)$ with a 1-ocycle when $v \geq 15$.
\end{result}
\begin{proof}
	Using Construction \ref{C2v+1}, we construct an overlap cycle for $(Y, \mathcal{B})$ as follows.   We will construct a 1-overlap cycle for triples of type (1), and then for the triples of types (2) and (3), and finally show that this sequence may be joined with the sequence for triples of type (1).
\\
\\
		\textbf{Step 1:  Triples of type (1):}  Let $O$ be a 1-ocycle on $\mathcal{A}$.  Define $\{1\} \oplus O$ to be the cycle obtained by preceding each point in $O$ to with a 1, i.e. each point becomes an ordered pair with first coordinate 1.  Then $\{1\} \oplus O$ is a 1-ocycle for the set of triples of type (1).
		\\
		\\
		\textbf{Step 2:  Triples of type (2):}  We first define the \textbf{difference} of the triple to be the smaller of $x-y$ and $y-x$ (modulo $v$).  Then we partition the set of triples of type (2) depending on their difference $d$.  This creates an equivalence relation on the set of triples of type (2).  We will construct 1-ocycles for each equivalence class separately. 
		\begin{description}
			\item[$\mathbf{d=1}$:]  We have the overlap cycle (in compressed form, with hidden elements removed): $$(0,0),(0,1),(0,2), \ldots , (0,v-1), (0,0).$$
			\item[$\mathbf{d \geq 3}$:]  We follow the same procedure as for $d=1$ by beginning with point $(0,0)$ and moving to point $(0, d)$, then $(0, 2d)$, and so on.  We note however that if $d \mid v$ the procedure will not produce a cycle that covers all triples.  However, when this happens we can repeat the process beginning with the first triple that remains unused.  In this manner, we will obtain several disjoint ocycles, and every triple of type (2) with the given difference $d$ will be covered by one of these cycles.
		\end{description}
		Note that for difference $d=1$, every point of type $(0,x)$ for $x \in X$ appears as an overlap point.  Thus for all overlap cycles associated with $d \geq 3$, we can join them to the cycle for $d=1$.  We reserve the triples corresponding to $d=2$ to include with the triples of type (3).
		\\
		\\
		\textbf{Step 3:  Triples of type (3):}  We construct an ocycle to include triples of type (2) with $d=2$ and triples of type (3) as follows.  First, connect pairs of triples of the form: $$\begin{array}{ccc} (1,x+1) & (0,x) & (0,x+2) \\ \\ & \hbox{and} &\\ \\ (0, x+2) & \infty & (1,x+2) \end{array}$$  Then we may use all of these pairs to form the ocycle: $$\underline{(1,1)}, (0,0), \underline{(0,2)}, \infty, \underline{(1,2)}, (0,1), \underline{(0,3)}, \infty, \underline{(1,3)}, \ldots , \underline{(1,0)}, (0,v-1), \underline{(0,1)}, \infty, \underline{(1,1)}.$$  This cycle accounts for $v$ of these pairs of triples, and since no triple is covered twice it must cover all triples of type (2) with $d=2$ and all triples of type (3).
	\\
	\\
	To connect all of the constructed cycles, we note that the cycle created in Step 3 contains the points $(0,x)$ and $(1,x)$ for every $x \in X$ as an overlap.  Thus we can connect the cycle created in Step 1 to this cycle, as well as the cycle created in Step 2.  This produces one long 1-ocycle that covers all triples.
\end{proof}

\begin{result}\label{OC2v+7}
	If there exists an AF STS$(v$) with a 1-ocycle, then there exists an AF STS$(2v+7)$ with a 1-ocycle, when $v \geq 15$.
\end{result}

\begin{proof}
	Using Construction \ref{C2v+7}, we will find ocycles for subsets of blocks, and show that they can be combined to form one long ocycle for the entire design.
	\\
	\\
		\textbf{Step 1:  Triples of type (1):}  Let $O$ be a 1-ocycle on $\mathcal{A}$.  Then $\{1\} \oplus \mathcal{A}$ also has a 1-ocycle, given by $\{1\} \oplus O$.
		\\
		\\
		\textbf{Step 2:  Triples of type (3):}  We construct one long cycle: $$\underline{(0,0)}, (0,6), \underline{(0,2)}, (0,8), \underline{(0,4)}, \ldots , \underline{(0, v-1)}, (0,5), \underline{(0,1)}, \ldots , \underline{(0,v-2)}, (0,4), \underline{(0,0)}.$$  Note that since $v$ must always be odd, we see the point $(0,x)$ for every $x \in \mathbb{Z}_v$ as an overlap point in this cycle.
		\\
		\\
		\textbf{Step 3:  Triples of type (4) with $|y|> 4$:}  We start by creating the ocycle (in compressed form, with hidden elements removed): $$(0,0), (0,2y), (0,4y), (0,6y), \ldots , (0,0).$$  This cycle contains all triples of type (4) associated with a particular $y$, since $v$ must be odd.  Note also that in each cycle, all of the points $(0,x)$ for every $x \in \mathbb{Z}_v$ appear as overlaps.  Thus we can connect all of these cycles for each choice of $y$ with $|y| > 4$. 
		\\
		\\
		\textbf{Step 4:  Triples of type (4) with $|y| = 4$ and type (5) with $i = -3$:}  We begin by pairing up blocks as follows so as to partition $\mathcal{B}$: $$\begin{array}{ccc} \{ (0,x), & (0,x+8), & (1,x+4)\} \\ \\ & \hbox{and} & \\ \\ \{(1,x+4), & \infty_{-3}, & (0,x+1)\} \end{array}$$  We can connect up these pairs in order, starting with the pair that begins $(0,0)$ and then moving to the pair that begins $(0,1)$, and so on.  We will eventually end with the pair starting $(0,v-1)$, which ends with the point $(0,0)$.  Thus we have an overlap cycle.  Note that in this cycle the points $(0,x)$ and $(1,x)$ appear as overlap points for every $x \in \mathbb{Z}_v$.
		\\
		\\
		\textbf{Step 5:  Triples of type (2):}  The triples of type (2) correspond to an STS(7).  We have shown in Section \ref{BaseC} that a 1-ocycle exists for the unique STS(7).  We will use the cycle from Step 4 to join the triples of type (2).  If we break the cycle from Step 4 between the blocks $$\begin{array}{cccc} \{(0,v-8), & (0,0), & (1,v-4)\} & \hbox{and} \\ \{(1,v-4), & \infty_{-3}, & (0,v-7)\} \end{array}$$ and also between the blocks $$\begin{array}{cccc} \{(1,3), & \infty_{-3}, & (0,0)\} & \hbox{and} \\ \{(0,0), & (0,8), & (1,4)\} \end{array}$$ then we now have two 1-overlap paths: $$\underline{(0,0)}, (0,8), \underline{(1,4)}, \ldots , \underline{(0,v-8)}, (0,0), \underline{(1,v-4)}$$ $$\hbox{and}$$ $$\underline{(1,v-4)}, \infty_{-3}, \underline{(0,v-7)}, \ldots , \underline{(1,3)}, \infty_{-3}, \underline{(0,0)}.$$  We can swap the order of the last two elements in the first path, and swap the order of the first two and the order of the last two elements in the second path two obtain the following two 1-ocycles: $$\underline{(0,0)}, (0,8), \underline{(1,4)}, \ldots , \underline{(0,v-8)}, (1,v-4), \underline{(0,0)}$$ $$\hbox{and}$$ $$\underline{\infty_{-3}}, (1,v-4), \underline{(0,v-7)}, \ldots , \underline{(1,3)}, (0,0), \underline{\infty_{-3}}.$$   Now we have $\infty_{-3}$ as an overlap point in the second cycle and so we can join this ocycle to the STS(7) ocycle (which contains every point $\infty_i$ as an overlap point).
		\\
		\\
		\textbf{Step 6:  Triples of type (5) with $i \neq -3$:}  We construct three separate ocycles as follows.  For $k \in \{-2,0,2\}$, construct the cycle: $$(0,0), (1,k), (0,1), (1,k+1), \ldots$$  When $k=-2$, this covers all triples of type $$\{(0,x), (1,x-2), \infty_2\} \hbox{ and } \{(0,x), (1,x-3), \infty_3\}.$$  When $k=0$, this covers all triples of type $$\{(0,x), (1,x), \infty_0\} \hbox{ and } \{(0,x), (1,x-1), \infty_1\}.$$  When $k=2$, this covers all triples of type $$\{(0,x), (1,x+2), \infty_{-2}\} \hbox{ and } \{(0,x), (1,x+1), \infty_{-1}\}.$$  These three cycles cover all triples of type (5) with $i \neq -3$.
		\\
		\\
	The cycles from Steps 2, 3, 5, and 6 all contain the point $(0,x)$ for every $x \in \mathbb{Z}_v $, and so can be connected.  The cycles from Steps 1, 5, and 6 all contain the point $(1,x)$ for every $x \in \mathbb{Z}_v \setminus \{v-4\}$, and so can be connected.  Since the cycle from Step 5 appears in both cases, these two long cycles can also be connected.  Thus, all triples are contained in one of the connected cycles, and so we have a 1-ocycle that covers all blocks.
\end{proof}

We are now ready to prove Result \ref{main}.
\begin{proof}[Proof of Result \ref{main}]
	We proceed by induction on $n$.  For $n=15,19,21,25,27,33$, we have shown ocycles in Section \ref{BaseC} and the appendix.
	
	For $n \geq 37$ and $n \equiv 1 \pmod {12}$, there exists $v \equiv 3 \pmod 6$ with $n = 2v+7$.  Note that $n \geq 37$ implies that $v \geq 15$.  Thus we use Result \ref{OC2v+7} to find the STS($n$).
	
	For $n \geq 39$ and $n \equiv 3 \pmod {12}$, there exists $v \equiv 1 \pmod 6$ with $n = 2v+1$.  Note that $n \geq 39$ implies that $v \geq 19$.  Thus we use Result \ref{OC2v+1} to find the STS($n$).
	
	For $n \geq 31$ and $n \equiv 7 \pmod {12}$, there exists $v \equiv 3 \pmod 6$ with $n = 2v+1$.  Note that $n \geq 31$ implies that $v \geq 15$, so we use Result \ref{OC2v+1} to find the STS($n$).
	
	For $n \geq 45$ and $n \equiv 9 \pmod {12}$, there exists $v \equiv 1 \pmod 6$ with $n = 2v+7$.  Note that $n \geq 45$ implies that $v \geq 19$, so we use Result \ref{OC2v+7} to find the STS($n$).
\end{proof}

\begin{cor}
	For every $n \geq 15$ with $n \equiv 1, 3\pmod 6$, there exists an AF STS$(n)$ with a rank two ucycle.
\end{cor}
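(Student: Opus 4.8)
The plan is to obtain the statement as an immediate consequence of Result \ref{main} together with the correspondence between compressed 1-ocycles and rank two ucycles described in the introduction. First I would apply Result \ref{main} to fix, for each admissible $n \geq 15$, an AF STS$(n)$ together with a 1-ocycle $O$ on its blocks. Since the design is already automorphism free, it suffices to produce a rank two ucycle on this same design, so the remaining work is purely a combinatorial manipulation of $O$.

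The central step is to pass to the compressed form of $O$ and recognize it as a rank two ucycle. Reading $O$ in full, each block appears as a triple $a,b,c$ whose outer entries $a$ and $c$ are the overlap points shared with the neighbouring blocks while $b$ is the hidden middle point. Deleting all hidden points yields a cyclic word $p_1 p_2 \cdots p_m$ of overlap points, and I would show that its consecutive pairs $(p_i, p_{i+1})$ form exactly a rank two ucycle: each such pair consists of two points lying in a common block, and because every pair of points in an STS determines a unique block, the pair both names and represents that block.

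To certify that this is a genuine rank two ucycle I would verify its three defining properties. Coverage holds because $O$ traverses every block exactly once and compression retains one overlap point at each junction between consecutive blocks, so $m$ equals the number of blocks. Faithfulness of the representation follows from the STS axiom: distinct blocks cannot contain the same pair of points, so the pairs $(p_i, p_{i+1})$ are pairwise distinct. The one-point overlap property is automatic, since consecutive pairs $(p_{i-1}, p_i)$ and $(p_i, p_{i+1})$ meet precisely in $p_i$.

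This is simply the converse of the reduction behind Corollary \ref{CDewar}, where a rank two ucycle was read as a compressed 1-ocycle; here I compress a 1-ocycle into a rank two ucycle. I do not expect a real obstacle, since all the nontrivial content already resides in Result \ref{main}, and the single ingredient needed for the conversion is the defining property of an STS that each pair of points lies in exactly one block. The only detail warranting a line of care is that within each block the hidden point is distinct from its two overlap points, which is immediate because a block has three distinct elements; this guarantees that compression removes exactly one symbol per block, so the resulting cyclic word has length equal to the number of blocks.
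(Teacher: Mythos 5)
Your proposal is correct and is essentially the paper's own proof: the paper likewise invokes Result \ref{main} to obtain an AF STS$(n)$ with a 1-ocycle and then observes that the 1-ocycle in compressed form is a rank two ucycle. Your additional verification (one symbol removed per block, distinctness of the representing pairs via the STS axiom, the automatic one-point overlap) merely spells out details the paper leaves implicit, and all of it is sound.
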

\begin{proof}
	Using Result \ref{main}, we construct an AF STS($n$) with a 1-ocycle.  The 1-ocycle in compressed form is a rank two ucycle.
\end{proof}

\section{Other STS Constructions with Overlap Cycles}

In this section, we look at several other known constructions for Steiner triple systems, and show their corresponding 1-ocycle constructions.

\begin{const}\label{PC}
	\emph{(See \cite{TripleSystems}, p. 39 - Direct Product)}  Given $(X, \mathcal{A})$, an STS($u$) and $(Y, \mathcal{B})$, an STS($v$), identify $X$ with $\mathbb{Z}_u$ and $Y$ with $\mathbb{Z}_v$.  We construct a new STS($uv$) $=(Z, \mathcal{C})$ with points: $$Z = \mathbb{Z}_u \times \mathbb{Z}_v$$ and blocks:
	\begin{enumerate}
		\item  $\{(i,a), (i,b), (i,c)\}$ with $i \in \mathbb{Z}_u$ and $\{a,b,c\} \in \mathcal{B}$,
		\item  $\{(i,a), (j,a), (k,a)\}$ with $\{i,j,k\} \in \mathcal{A}$ and $a \in \mathbb{Z}_v$, and
		\item  $\{(i,a), (j,b), (k,c)\}$ with $\{i,j,k\} \in \mathcal{A}$ and $\{a,b,c\} \in \mathcal{B}$.
	\end{enumerate}
\end{const}

The following theorem (see \cite{TripleSystems}) proves that Construction \ref{PC} is correct.

\begin{thm}
	  If $(X, \mathcal{A})$ is an STS$(v)$ and $(Y, \mathcal{B})$ is an STS$(w)$, then $(Z, \mathcal{C})$ is an STS$(vw)$.
\end{thm}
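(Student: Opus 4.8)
The plan is to verify the defining properties of a Steiner triple system directly: that $|Z| = uv$, that every element of $\mathcal{C}$ is a genuine three-element subset of $Z$, and, the substantive part, that every unordered pair of distinct points of $Z$ lies in exactly one block. Writing $(X,\mathcal{A})$ for the STS($u$) and $(Y,\mathcal{B})$ for the STS($v$) as in Construction \ref{PC}, the count $|Z| = |\mathbb{Z}_u \times \mathbb{Z}_v| = uv$ is immediate, and I would dispatch the ``three distinct points'' check family by family: in type (1) the second coordinates $a,b,c$ are distinct since $\{a,b,c\} \in \mathcal{B}$, in type (2) the first coordinates $i,j,k$ are distinct since $\{i,j,k\} \in \mathcal{A}$, and in type (3) the points already differ in their first coordinate.

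The heart of the argument is a case analysis on a pair of distinct points $p=(i_1,a_1)$ and $q=(i_2,a_2)$, split according to whether they agree in the first coordinate, in the second, or in neither. First I would record the structural dichotomy that keeps the cases from interfering: a type (1) block has all three first coordinates equal, a type (2) block has all three second coordinates equal, and a type (3) block has all first coordinates distinct \emph{and} all second coordinates distinct. Hence a pair sharing a first coordinate can lie only in a type (1) block, a pair sharing a second coordinate only in a type (2) block, and a pair differing in both coordinates only in a type (3) block, so the three cases are genuinely independent.

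When $i_1 = i_2$ and $a_1 \neq a_2$, the pair $\{a_1,a_2\}$ lies in a unique block $\{a_1,a_2,c\} \in \mathcal{B}$ by the STS property of $(Y,\mathcal{B})$, giving the unique type (1) block $\{(i_1,a_1),(i_1,a_2),(i_1,c)\}$ through $p$ and $q$; the case $i_1 \neq i_2$, $a_1 = a_2$ is symmetric, using the STS property of $(X,\mathcal{A})$. The remaining case $i_1 \neq i_2$, $a_1 \neq a_2$ carries the real content. Here the first coordinates determine a unique block $\{i_1,i_2,k\} \in \mathcal{A}$ and the second coordinates a unique block $\{a_1,a_2,c\} \in \mathcal{B}$, so $p$ and $q$ are confined to the $3\times 3$ grid $\{i_1,i_2,k\}\times\{a_1,a_2,c\}$. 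I would then show that the type (3) blocks meeting this grid are precisely its six transversals (the graphs of the six bijections between the two coordinate-blocks), that these transversals split the $18$ grid-pairs differing in both coordinates into six disjoint triples, and that exactly one transversal runs through both $p$ and $q$, namely the bijection $i_1 \mapsto a_1$, $i_2 \mapsto a_2$, $k \mapsto c$. Since distinct pairs of $\mathcal{A}$-block and $\mathcal{B}$-block yield disjoint grids, these grids partition all pairs differing in both coordinates, and each such pair is covered exactly once.

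The step I expect to be the main obstacle is this last case, and in particular pinning down the intended reading of the type (3) family: the notation $\{(i,a),(j,b),(k,c)\}$ must be interpreted as ranging over all six pairings of a block of $\mathcal{A}$ with a block of $\mathcal{B}$, not a single fixed matching. The transversal bookkeeping forces exactly this reading, and I would sanity-check it against a global count, since the construction then produces $6|\mathcal{A}||\mathcal{B}| = uv(u-1)(v-1)/6$ blocks of type (3), matching the $uv(u-1)(v-1)/2$ pairs differing in both coordinates divided by three. Together with the two easy cases, this shows every pair lies in exactly one block, establishing that $(Z,\mathcal{C})$ is an STS($uv$).
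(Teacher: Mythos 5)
Your proof is correct, and in fact the paper offers no proof of this theorem at all: it is quoted from Colbourn and Rosa's \emph{Triple Systems} (the reference \cite{TripleSystems} cited just above the statement), so there is nothing in the paper to compare against line by line. Your argument is the standard one for the direct product, and it is complete: the structural dichotomy (a pair agreeing in the first coordinate lies only in type (1) blocks, agreeing in the second only in type (2), differing in both only in type (3)) correctly isolates the three cases, and the two easy cases reduce immediately to the STS property of $(Y,\mathcal{B})$ and $(X,\mathcal{A})$ respectively. You also put your finger on the one genuinely delicate point, namely that the type (3) notation $\{(i,a),(j,b),(k,c)\}$ must be read as ranging over all six bijections between a block of $\mathcal{A}$ and a block of $\mathcal{B}$; with that reading, the six transversals of the $3\times 3$ grid $\{i_1,i_2,k\}\times\{a_1,a_2,c\}$ cover its $18$ mixed pairs exactly once, with uniqueness forced since a transversal through $p$ and $q$ must send $i_1\mapsto a_1$, $i_2\mapsto a_2$, and hence $k\mapsto c$. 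Your counting sanity check confirms this is the intended construction (it also makes the total block count come out to $uv(uv-1)/6$). The only blemish is cosmetic and inherited from the paper itself: the theorem statement uses $v,w$ while Construction \ref{PC} and your proof use $u,v$.
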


An interesting consequence of the direct product is the following theorem.
\begin{thm}
	\emph{(See \cite{TripleSystems}, Lemma 7.12)}  The automorphism group of the direct product of two triple systems is the direct product of their automorphism groups.
\end{thm}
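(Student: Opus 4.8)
The plan is to prove the two inclusions
\[
\mathrm{Aut}(X,\mathcal A)\times\mathrm{Aut}(Y,\mathcal B)\ \subseteq\ \mathrm{Aut}(Z,\mathcal C)\ \subseteq\ \mathrm{Aut}(X,\mathcal A)\times\mathrm{Aut}(Y,\mathcal B)
\]
separately, where I write $(X,\mathcal A)$ for the STS$(u)$ and $(Y,\mathcal B)$ for the STS$(v)$. The first inclusion is routine verification. Given $\alpha\in\mathrm{Aut}(X,\mathcal A)$ and $\beta\in\mathrm{Aut}(Y,\mathcal B)$, I define $\alpha\times\beta$ on $Z=\mathbb Z_u\times\mathbb Z_v$ by $(i,a)\mapsto(\alpha(i),\beta(a))$ and check on each block type that the image is again a block: a type (1) block maps to a type (1) block because $\beta$ preserves $\mathcal B$ and the first coordinate stays constant; a type (2) block maps to a type (2) block because $\alpha$ preserves $\mathcal A$; and a type (3) block maps to a type (3) block because $\alpha$ preserves $\mathcal A$ and $\beta$ preserves $\mathcal B$ simultaneously. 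The assignment $(\alpha,\beta)\mapsto\alpha\times\beta$ is visibly an injective group homomorphism, so all the work is in the reverse inclusion: showing that \emph{every} $\phi\in\mathrm{Aut}(Z,\mathcal C)$ has the form $\alpha\times\beta$.

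For the reverse inclusion I would set up the grid structure. For each $i\in\mathbb Z_u$ the \emph{column} $C_i=\{i\}\times\mathbb Z_v$ is a subsystem isomorphic to $(Y,\mathcal B)$ (its internal blocks are exactly the type (1) blocks), and for each $a\in\mathbb Z_v$ the \emph{row} $R_a=\mathbb Z_u\times\{a\}$ is a subsystem isomorphic to $(X,\mathcal A)$ (its internal blocks are the type (2) blocks); moreover $C_i\cap R_a=\{(i,a)\}$. The key reduction is: \emph{if} I can show that $\phi$ carries the column partition $\{C_i\}$ back to the column partition and the row partition $\{R_a\}$ back to the row partition, then the result follows by bookkeeping. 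Indeed, $\phi$ would then induce a permutation $\alpha$ of $\mathbb Z_u$ with $\phi(C_i)=C_{\alpha(i)}$ and a permutation $\beta$ of $\mathbb Z_v$ with $\phi(R_a)=R_{\beta(a)}$, whence $\phi(i,a)\in\phi(C_i)\cap\phi(R_a)=C_{\alpha(i)}\cap R_{\beta(a)}=\{(\alpha(i),\beta(a))\}$, i.e. $\phi=\alpha\times\beta$. Applying $\phi$ to a type (1) block forces $\{\beta(a),\beta(b),\beta(c)\}\in\mathcal B$ whenever $\{a,b,c\}\in\mathcal B$, giving $\beta\in\mathrm{Aut}(Y,\mathcal B)$, and symmetrically type (2) blocks give $\alpha\in\mathrm{Aut}(X,\mathcal A)$.

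The main obstacle, then, is proving the grid invariance, which I would reduce to showing that $\phi$ respects the partition of pairs of points into the three block types (equivalently, that the ``same column'' and ``same row'' equivalence relations are preserved). The natural tool is an intrinsic characterization of the columns and rows that an automorphism must respect. A first invariant is the count, through any point, of blocks of each type, namely $(v-1)/2$ of type (1), $(u-1)/2$ of type (2), and $(u-1)(v-1)/2$ of type (3); but because proper subsystems such as $X'\times\mathbb Z_v$ (for a subsystem $X'\subseteq X$) also exist, columns are \emph{not} simply the maximal proper subsystems, so I expect a pure subsystem-lattice argument to be insufficient and a finer configuration count (for instance, separating within-fiber pairs from transversal pairs by counting the Pasch/quadrilateral configurations they lie in) to be required. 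Finally I would flag the genuine subtlety that an automorphism could in principle carry columns to rows: this occurs precisely when the two factors are isomorphic, in which case a coordinate swap is an extra automorphism interchanging the two partitions, so the stated equality is the assertion for non-isomorphic factors (where either the unequal fiber sizes when $u\neq v$, or the non-isomorphism of $C_i\cong(Y,\mathcal B)$ with $R_a\cong(X,\mathcal A)$, forbids mixing the two partitions). Establishing this grid invariance is the crux; everything else is the verification sketched above.
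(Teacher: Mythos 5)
First, a point of comparison that matters: the paper contains no proof of this statement at all. It is imported verbatim from \cite{TripleSystems} (Lemma 7.12) and used as a black box (to conclude that direct products of AF systems are AF), so there is no in-paper argument to measure your route against; your attempt must stand on its own. On its own terms, the parts you actually carry out are correct: $\alpha\times\beta$ is always an automorphism of the product, the map $(\alpha,\beta)\mapsto\alpha\times\beta$ is an injective homomorphism, and your reduction is valid --- if every $\phi\in\mathrm{Aut}(Z,\mathcal{C})$ preserved the column partition $\{C_i\}$ and the row partition $\{R_a\}$, then $\phi(i,a)\in C_{\alpha(i)}\cap R_{\beta(a)}$ would force $\phi=\alpha\times\beta$. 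Your observation that a coordinate swap yields extra automorphisms when the factors are isomorphic is also a genuine (and correct) catch: since the type (3) blocks comprise \emph{all} six bijections between a block of $\mathcal{A}$ and a block of $\mathcal{B}$, the product construction is symmetric, so the quoted statement cannot be literally true without some qualification.

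The genuine gap sits exactly where you located the crux: grid invariance is never established, only wished for (``I expect \ldots a finer configuration count \ldots to be required''), so this is a plan rather than a proof --- and, worse, the crux is false at the generality you claim, namely for merely non-isomorphic factors. Take $(X,\mathcal{A})$ to be the unique STS$(9)$ (the lines of $AG(2,3)$) and $(Y,\mathcal{B})$ the STS$(3)$. Every permutation of $\mathbb{Z}_3$ is affine ($S_3\cong AGL(1,3)$), so the type (3) blocks are precisely the lines of $AG(3,3)$ meeting each horizontal plane once; one checks the direct product is exactly the point-line design of $AG(3,3)$, whose automorphism group $AGL(3,3)$ has order $303264$, vastly exceeding $|AGL(2,3)|\cdot|S_3|=432\cdot 6=2592$. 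The factors here are non-isomorphic (they do not even have the same order), yet automorphisms mix the partitions freely: $GL(3,3)$ moves the parallel class of ``vertical'' lines (the columns) to arbitrary parallel classes, so the image of the column partition is typically neither the column nor the row partition. Your fallback arguments only exclude a wholesale swap of the two partitions, not this behavior, and no isomorphism invariant --- block-type counts through pairs, Pasch counts, or anything else --- can separate ``same column'' from other pairs in a design as homogeneous as $AG(3,3)$. The obstruction is a common direct factor ($X\cong Y\times Y$ here), so the true statement needs hypotheses beyond non-isomorphism (e.g., directly indecomposable, pairwise non-isomorphic factors), which presumably the source carries and the paper's quotation elides; your proposed configuration-count route cannot close the gap without importing such hypotheses explicitly.
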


This theorem implies another method for constructing AF Steiner triple systems that admit ocycles.  Beginning with two AF Steiner triple systems with corresponding 1-ocycles, we can use the following result to construct a 1-ocycle on their direct product.

\begin{result}\label{OCPC}
	If there exists an STS$(u)$ with a 1-overlap cycle and an STS$(v)$ with a 1-overlap cycle, then there exists an STS$(uv)$ with a 1-ocycle.
\end{result}
\begin{proof}
	Let $(X, \mathcal{A})$ be an STS($u$) and $(Y, \mathcal{B})$ be an STS($v$) that admit 1-ocycles $O(\mathcal{A})$ and $O(\mathcal{B})$, respectively.  We construct an STS($uv$) using Construction \ref{PC} (the direct product).  For each $i \in \mathbb{Z}_u$, we have a 1-ocycle covering the triples of type 1, namely $i \oplus O(\mathcal{B})$.  Similarly, for each $a \in \mathbb{Z}_v$, we have a 1-ocycle covering the triples of type 2:  $O(\mathcal{A}) \oplus a$.  Lastly, for each $A=\{i,j,k\} \in \mathcal{A}$ and each $B =\{a,b,c\} \in \mathcal{B}$, we can construct the following 1-ocycle: 
	$$\begin{array}{ccc}
		\{(i,a), & (j,b), & (k,c)\} \\
		\{(k,c), & (j,a), & (i,b)\} \\
		\{(i,b) ,& (j,c), & (k,a)\} \\
		\{(k,a) ,& (j,b), & (i,c)\} \\
		\{(i,c) ,& (j,a), & (k,b)\} \\
		\{(k,b) ,& (j,c), & (i,a)\}
	\end{array}$$
	To connect cycles covering triples of types (1) and (2), we connect wherever possible.  Starting with $0 \oplus O(\mathcal{B})$, we connect all cycles over triples of type (2).  Then, starting with an arbitrary, already connected, cycle over triples of type (2), we repeat the process by adding cycles over triples of type (1) wherever possible.  We continue this process of extending our cycle until we no longer are able to add any more cycles.
	
	We will always be able to continue to connect cycles, except when all cycles are connected, or:
	\begin{enumerate}
		\item  there exists $i \in \mathbb{Z}_u$ that never appears as an overlap point in $O(\mathcal{A})$, and/or,
		\item  there exists $a \in \mathbb{Z}_v$ that never appears as an overlap point in $O(\mathcal{B})$.
	\end{enumerate}
	If we have both cases, then we choose a block $A \in \mathcal{A}$ containing $i$, say $A = \{i,j,k\}$, and a block $B \in \mathcal{B}$ containing $a$, i.e. $B = \{a,b,c\}$.  Then we arrange the cycle covering the triples from $A \times B$ to begin with $(i,a)$.  Since two points of $A$ must appear as overlap points in $O(\mathcal{A})$ and $i$ is not one of them, we must have that $j$ and $k$ are overlap points in $O(\mathcal{A})$.  Similarly, $b$ and $c$ must be overlap points in $O(\mathcal{B})$.  Thus we can connect the cycle for $A \times B$ to the cycles $k \oplus O(\mathcal{B})$ (at point $(k,c)$) and $O(\mathcal{A}) \oplus c$ (at point $(k,c)$ as well).  Note that since each block can only contain one hidden element, this process will never use a block from $\mathcal{A}$ or $\mathcal{B}$ more than once.  If only case (1) or case (2) holds (but not both), this process is repeated with an arbitrary choice of block from $\mathcal{B}$.
\end{proof}

\begin{const}\label{3m6}
	\emph{(\cite{Bose}, Bose Construction)}  Suppose that $n \equiv 3 \pmod 6$;  then $n=3m$ for some $m$ odd.  The point set is made up of three copies of the integers modulo $m$.  Formally: $$X = \mathbb{Z}_3 \times \mathbb{Z}_m.$$  Blocks are of two types:
	\begin{enumerate}
		\item  $\{(a,i), (a,j), (a+1,k)\}$ with $i+j = 2k$ for each $a \in \mathbb{Z}_3$
		\item  $\{(0,i), (1,i), (2,i)\}$ for each $i \in \mathbb{Z}_m$
	\end{enumerate}	
\end{const}

The following theorem from \cite{Bose} proves that Construction \ref{3m6} is correct.

\begin{thm}
	\emph{\cite{Bose}}  If $n \equiv 3 \pmod 6$, there exists an STS$(n)$.
\end{thm}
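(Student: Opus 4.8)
The plan is to verify directly the defining property of a Steiner triple system: every unordered pair of distinct points of $X = \mathbb{Z}_3 \times \mathbb{Z}_m$ lies in exactly one block. The one fact to record at the outset is that $n \equiv 3 \pmod 6$ forces $n = 3m$ with $m$ \emph{odd}, so that $2$ is a unit in $\mathbb{Z}_m$. Consequently the condition $i+j = 2k$ determines $k = 2^{-1}(i+j)$ uniquely from $i,j$, and dually determines one of $i,j$ from the other together with $k$. This invertibility of $2$ is exactly where the hypothesis $n \equiv 3 \pmod 6$ (rather than merely $n \equiv 0 \pmod 3$) is used, and I expect it to be the only genuinely delicate ingredient.

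I would then record the structural dichotomy of the block types: a type (1) block has exactly two points in some copy $\{a\} \times \mathbb{Z}_m$ and one point in the adjacent copy $\{a+1\} \times \mathbb{Z}_m$, whereas a type (2) block meets each of the three copies in a single point. With this in hand I would split the pairs into three classes and treat each separately.

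For a same-copy pair $\{(a,i),(a,j)\}$ with $i \neq j$, only a type (1) block can contain two points of copy $a$, its two copy-$a$ points must be exactly $(a,i),(a,j)$, and then $k = 2^{-1}(i+j)$ is forced; so the containing block is unique. For a cross-copy pair $\{(a,i),(b,i)\}$ with a common second coordinate, the type (2) block $\{(0,i),(1,i),(2,i)\}$ contains it, and no type (1) block can, since the cross-copy pair inside a type (1) block is $\{(a,x),(a+1,k)\}$ with $k = 2^{-1}(x+y) \neq x$, so its two second coordinates always differ. For a cross-copy pair $\{(a,i),(b,j)\}$ with $i \neq j$, I would relabel so that $b = a+1$ (any two distinct elements of $\mathbb{Z}_3$ are consecutive in one order), observe that $(a+1,j)$ can only play the role of the singleton point of a type (1) block based at $a$, and conclude that $k = j$ and the other copy-$a$ point $= 2j - i$ are both forced, the latter being distinct from $i$ precisely because $2$ is a unit. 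Hence each pair lies in exactly one block.

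As a consistency check I would count blocks: there are $3 \cdot \frac{m(m-1)}{2} = \frac{3m(m-1)}{2}$ of type (1) and $m$ of type (2), totalling $\frac{m(3m-1)}{2} = \frac{1}{3}\cdot\frac{3m(3m-1)}{2}$, exactly the block count of an STS$(3m)$; this certifies that ``at most once'' and ``exactly once'' coincide and guards against an arithmetic slip in the casework. The main obstacle is simply to keep the ``solve for the missing coordinate'' steps honest --- each relies on $2^{-1}$ existing and on verifying that the recovered third point is genuinely distinct from the two given ones; the remainder is bookkeeping across the three copies.
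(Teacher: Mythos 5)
Your proof is correct: the paper gives no proof of this theorem at all, deferring entirely to the citation \cite{Bose}, and your pair-by-pair verification (same-copy pairs, cross-copy pairs with equal second coordinate, cross-copy pairs with distinct second coordinates), with the invertibility of $2$ in $\mathbb{Z}_m$ for odd $m$ as the key ingredient and the block count $3\cdot\frac{m(m-1)}{2}+m=\frac{3m(3m-1)}{6}$ as a check, is exactly the standard argument establishing that Construction \ref{3m6} yields an STS$(3m)$. The only point worth flagging is that the construction as printed must be read with $i \neq j$ in the type (1) blocks (the case $i=j$ forces $k=i$ and degenerates into a type (2) block), which you implicitly and correctly assume throughout.
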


\begin{result}\label{OC3m6}
	For $n \equiv 3 \pmod 6$ with $n>3$, there exists an STS$(n)$ that admits a 1-ocycle.
\end{result}
\begin{proof}
	We will use Construction \ref{3m6} to create an STS($n$), then construct 1-ocycles to cover each type of triples, and finally show how to connect them into one large cycle.  First, note that we have $m\geq 3$ since $n>3$, and so there exists at least three triples of each kind.
\\
\\
		\textbf{Step 1:  Triples of type (1) with $a=1$:}  Define the value $\min \{i-j ,j-i \}$, where subtraction is done in the group $\mathbb{Z}_m$, to be the \textbf{distance} for the triple $\{(1,i), (1,j), (2,k)\}$.  Partition the blocks of type (1) into classes so that  the blocks $\{(1,i), (1,j), (2,k)\}$ and $\{(1,r), (1,s), (2,t)\}$ are in the same class if and only if they have the same distance.  This defines an equivalence relation on the set of blocks of type (1) with $\frac{m-1}{2}$ different equivalence classes.  Create a cycle using the set of blocks having the form $\{(1,i), (1,i+1), (2,i+\frac{m-1}{2}+1)\}$ as shown below in compressed form:  $$(1,0) (1,1) (1,2) \cdots (1,m-1) (1,0)$$ in compressed form.  Create similar (possibly shorter) cycles using the blocks within each other equivalence class.  This creates at least one, if not several disjoint, cycles for each equivalence class.  Since the first cycle created (using blocks with distance 1) has every point $(1,i)$ as an overlap point, we can combine all of these cycles to make one long cycle.
		\\
		\\
		\textbf{Step 2:  Triples of type (1) with $a=2$:}
		Repeat as in Step 1.  We pay careful attention to attach the cycle for distance 2 blocks at the point $(2,0)$.  Note that this is possible since distance 2 also creates one long cycle covering the entire equivalence class, as $m$ must be odd.  Now we may be assured that the cycle corresponding to distance 2 does not have any cycles attached at the overlap point $(2,1)$ between the blocks $\{(2,m-1), (0,0), (2,1)\}$ and $\{(2,1), (0,2), (2,3)\}$.  Then, when we have combined all blocks of type (1) with $a=2$ to make a cycle, we convert the cycle to a string by cutting it between these two blocks and then reversing the order of the last two points.  In other words, we now have a string that begins with $(2,1) (0,2) (2,3)$ and ends with $(2,m-1) (2,1) (0,0)$.
		\\
		\\
		\textbf{Step 3:  Triples of type (2) and (1) with $a=0$:}  Repeat as in Step 1 excluding the equivalence class with distance 2.  For these excluded blocks, we partition the set of blocks of type (1) and (2) into sets of size two by grouping together: $$\{(0,i), (2,i), (1,i)\} \hbox{ and } \{(1,i), (0,i-1), (0,i+1)\}.$$  Clearly the blocks in each set of size two can form a 1-overlap string, and then we can combine each of these strings to obtain a 1-ocycle of the form: $$\underline{(0,1)} (2,1) \underline{(1,1)} (0,0) \underline{(0,2)}  \cdots \underline{(0,i)} (2,i) \underline{(1,i)} (0,i-1) \underline{(0,i+1)} \cdots \underline{(0,0)} (2,0) \underline{(1,0)} (0,m-1) \underline{(0,1)}$$ $$\hbox{or}$$ $$(0,1) (1,1) (0,2) (1,2) \cdots (0,i) (1,i) (0,i+1) (1,i+1) \cdots (0,0) (1,0) (0,1)$$ in compressed form.
		\\
		\\
		\textbf{Step 4:  Combining the triples from Step 1 and Step 3}  Since the cycles created in Step 1  and Step 3 both contain every point $(1,i)$ as an overlap point, we can combine these two cycles.  More importantly, we have a choice of where to combine the cycles, since we have at least two choices for an overlap point $(1,i)$.  We choose to combine the two cycles at an overlap point other than $(1,1)$.  Then, we can create a string from this cycle by cutting the cycle between the blocks $\{(0,1), (2,1), (1,1)\}$ and $\{(1,1), (0,0), (0,2)\}$ (from cycle from Step 3), and reversing the order of the first two and the last two elements.  In other words, we now have a string that begins with $\{(2,1), (0,1), (1,1)\}$ and ends with $\{(1,1), (0,2), (0,0)\}$.  
		\\
		\\
		To create our final 1-ocycle, we recall that our string from Step 2 also begins with the point $(2,1)$ and ends with the point $(0,0)$, and so we can combine these two strings into one large cycle by reversing the order of the string from Step 4.
\end{proof}

\begin{const}\label{1m6}
	\emph{(\cite{Skolem}, Skolem Construction)}  If $n \equiv 1 \pmod 6$, then $n=6t+1$ for some $t \in \mathbb{Z}$.  We define the point set as $$Y = (\mathbb{Z}_{2t} \times \mathbb{Z}_3) \cup \{ \infty\}.$$  Then we define three types of blocks:
	\begin{enumerate}
		\item  $ A_x = \{(x,0), (x,1), (x,2)\}$ for $0 \leq x \leq t-1$.
		\item  $B_{x,y,i} = \{(x,i), (y,i), (x \circ y, i+1)\}$ for each $x , y \in \mathbb{Z}_{2t}$ with $x<y$ and each $i \in \mathbb{Z}_3$, and where $x \circ y = \pi(x+y \pmod {2t})$ and $$\pi(z) = \left\{ \begin{array}{ll}  z/2, & \hbox{ if } z \hbox{ is even,} \\ (z+2t-1)/2, & \hbox{ if } z \hbox{ is odd.}  \end{array}\right.$$
		\item  $C_{x,i} =  \{\infty, (x+t,i), (x,i+1)\}$ for each $0 \leq x \leq t-1$ and $i \in \mathbb{Z}_3$.
	\end{enumerate}
\end{const}

The following theorem from \cite{Skolem} proves that Construction \ref{1m6} is correct.

\begin{thm}
	If $n \equiv 1 \pmod 6$, then there is an STS$(n)$.
\end{thm}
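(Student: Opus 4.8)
The plan is to verify directly that the point set and block set of Construction~\ref{1m6} form an STS$(n)$, combining a counting identity with a coverage check. First I would confirm the parameters: $|Y| = |\mathbb{Z}_{2t}\times\mathbb{Z}_3| + 1 = 6t+1 = n$, and the total number of blocks is
$$t + 3\,t(2t-1) + 3t = t(6t+1),$$
from the $t$ blocks $A_x$, the $3\,t(2t-1)$ blocks $B_{x,y,i}$ (one for each unordered pair $x<y$ in $\mathbb{Z}_{2t}$ and each $i\in\mathbb{Z}_3$), and the $3t$ blocks $C_{x,i}$. Since each block contains exactly $3$ pairs, the number of (pair, block) incidences is $3t(6t+1)$, which equals the number $\tfrac{(6t+1)(6t)}{2}=3t(6t+1)$ of unordered pairs of points of $Y$. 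Consequently it suffices to prove that every pair lies in \emph{at least} one block: the incidence identity then forces every pair to lie in exactly one, and $(Y,\mathcal{B})$ is an STS$(n)$.

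To run the coverage check I would split the pairs into three types. A pair $\{\infty,(z,j)\}$ is covered by a block $C_{x,i}$, since as $x$ ranges over $\{0,\ldots,t-1\}$ and $i$ over $\mathbb{Z}_3$ the entries $(x+t,i)$ run over all second-half points and the entries $(x,i+1)$ run over all first-half points, so every one of the $6t$ points is paired with $\infty$. A same-layer pair $\{(x,j),(y,j)\}$ with $x\neq y$ is covered by $B_{x,y,j}$ (after ordering $x<y$) through its first two entries. The remaining case is a mixed pair $\{(u,j),(w,j+1)\}$ with $u,w\in\mathbb{Z}_{2t}$ in consecutive layers; since any two distinct layers of $\mathbb{Z}_3$ are consecutive, this exhausts all remaining pairs.

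The main obstacle is the mixed case, which rests entirely on the arithmetic of $\pi$ and the operation $x\circ y=\pi(x+y)$. The crucial first observation is that $\pi$ is a bijection of $\mathbb{Z}_{2t}$: it carries the even residues bijectively onto $\{0,\ldots,t-1\}$ and the odd residues bijectively onto $\{t,\ldots,2t-1\}$. Granting this, a block $B_{x,y,j}$ covers $\{(u,j),(w,j+1)\}$ exactly when $u\in\{x,y\}$ and $\pi(x+y)=w$, and invertibility of $\pi$ determines the partner uniquely as $y=\pi^{-1}(w)-u$. When $y\neq u$ this yields a genuine covering block, settling every mixed pair except the degenerate ones with $\pi^{-1}(w)=2u$, i.e. $w=\pi(2u)$. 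Here I would check the two branches of $\pi$: since $2u$ is even, $w=\pi(2u)$ forces $w=u$ when $u<t$ and $w=u-t$ when $u\geq t$. These are precisely the diagonal pairs supplied by the blocks $A_u$ and the pairs $\{(u,j),(u-t,j+1)\}$ supplied by the blocks $C_{u-t,\,j}$. Thus the degenerate mixed pairs, and only those, escape type~(2) and are caught by types~(1) and~(3). The delicate bookkeeping is confirming that this split into the ``$\pi$-invertible'' family and the degenerate family is clean, so that parities and the two branches of $\pi$ line up with no mixed pair missed; once that is in place, the counting identity of the first paragraph closes the argument.
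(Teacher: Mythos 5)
Your proposal is correct, but it cannot be compared against a proof in the paper because the paper gives none: this theorem is stated purely as a citation to Skolem's 1958 paper (``The following theorem from \cite{Skolem} proves that Construction \ref{1m6} is correct.''), so your self-contained verification is genuinely new content relative to the text. Your argument is the standard direct one, and the details check out. The counting identity is right: $t + 3t(2t-1) + 3t = t(6t+1)$ blocks give $3t(6t+1)$ pair--block incidences, which matches $\frac{(6t+1)(6t)}{2}$, so ``at least once'' does force ``exactly once'' (and, as a free by-product, that no two parameter tuples yield the same block --- note each tuple contributes three distinct pairs, since the third point of a $B$-block sits in a different layer, so the incidence count is exact even before distinctness is known). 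Your mixed-pair analysis is also exact: for a pair with layers $\{j, j+1\}$ in $\mathbb{Z}_3$, only $i=j$ can be the base layer of a covering $B$-block (since $j+2 \neq j$ in $\mathbb{Z}_3$), so the partner $y = \pi^{-1}(w) - u$ is uniquely determined by the bijectivity of $\pi$, which indeed carries evens onto $\{0,\dots,t-1\}$ and odds onto $\{t,\dots,2t-1\}$. The degenerate branch is handled correctly as well: $y = u$ forces $w = \pi(2u \bmod 2t)$, and since $2u \bmod 2t$ is even this gives $w = u$ when $u < t$ (caught by $A_u$) and $w = u - t$ when $u \geq t$ (caught by $C_{u-t,\,j}$); in particular your criterion implicitly certifies that same-coordinate mixed pairs with $u \geq t$ are non-degenerate, with partner $y \equiv u+1 \pmod{2t}$, hence covered by a $B$-block. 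What your route buys over the paper's citation is an elementary, checkable proof at the cost of the $\pi$-arithmetic bookkeeping, which you have carried out correctly.
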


\begin{result}\label{OC1m6}
	For $n \equiv 1 \pmod 6$ with $n>1$, there exists an STS$(n)$ that admits a 1-overlap cycle.
\end{result}
\begin{proof}
	We will use Construction \ref{1m6} to construct an STS($n$), then show how to construct disjoint cycles for most triples of type (2), then disjoint cycles for triples of types (1) and (3), and finally show how to combine them to make one large 1-ocycle containing all triples.
\\
\\
		\textbf{Step 1:  Triples of type (2):}  The triples of type (2) can be partitioned based on the pair $\{(x,i), (y,i)\}$.  Similar to Result \ref{OC3m6}, we define the \textbf{distance} of the triple to be the smaller of $x-y$ and $y-x$ (modulo $2t$).  Then, we can partition the set of triples of type (2) into classes that share the same distance for each difference $k<t$.  Following the method from Result \ref{OC3m6} (Step 1), we can create disjoint cycles that contain all of these triples.  Note that the triples corresponding to distance 1 make one long cycle for each second coordinate.  This cycle is (in compressed form): $$(0,i) (1,i) (2,i) \ldots (2t-1,i) \hbox{ for } i \in \mathbb{Z}_3.$$  These cycles contain every point from $\mathbb{Z}_{2t} \times \{i\}$ as an overlap point, and so we can hook up all of them to make three long cycles - one for each $i \in \mathbb{Z}_3$.  These cycles cover all triples of type (2) except those with distance $t$. 
		\\
		\\
		\textbf{Step 2:  Triples of types (1), (2), (3):}  We begin by partitioning the triples of type (3) into classes that contain the following blocks: $$\{\infty, (x+t,0), (x,1)\}, \{\infty, (x+t,1), (x,2)\}, \{\infty, (x+t,2), (x,0)\}.$$  Note that no other triples of type (3) contain any points with $x$ or $x+t$ as a first coordinate.  This set of blocks has a corresponding triple of type (1): $$\{(x,0), (x,1), (x,2)\}.$$  It also has a corresponding triple of type (2) with distance $t$: $$\{(x,i),(x+t,i), (x \circ (x+t), i+1)\} \hbox{ for } i \in \mathbb{Z}_3.$$  Using these blocks and defining $y = x+t$, we create the following cycle: $$\begin{array}{ccccccccccccccc}  \underline{x2} & (x\circ y)0 & \underline{y2}  & x0 & \underline{\infty} & x1 & \underline{y0} & (x \circ y)1 & \underline{x0} & x2 & \underline{x1} & (x \circ y)2 & \underline{y1} & \infty & \underline{x2} \end{array}$$  $$\hbox{or}$$ $$ \begin{array}{cccccccccc} x2 & y2 & \infty & y0 & x0 & x1 & y1 & x2   \end{array}$$  in compressed form.  This creates a set of disjoint cycles that cover all of the remaining triples.
	\\
	\\
	To combine all of our cycles and create our final 1-ocycle, we note that the cycles from Step 2 each have at least one overlap point of the form $(x,1)$ with $x \in \mathbb{Z}_t$, and so we can hook these cycles all up to the cycle from Step 1 that corresponds to $i=1$.  Also, each of the cycles from Step 2 also have overlap points $(x,i)$ corresponding to $i=1,2$ and $x \in \mathbb{Z}_t$, and so we can connect the remaining two cycles from Step 1.
\end{proof}

We can now use the direct constructions to prove the existence of an STS($v$) that admits a 1-ocycle for every $v \equiv 1, 3 \pmod 6$.

\begin{thm}\label{easymain}
  For every $v \equiv 1, 3 \pmod 6$, there exists an STS($v$) that admits a 1-ocycle.
\end{thm}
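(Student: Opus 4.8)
The plan is to obtain Theorem \ref{easymain} as an immediate consequence of the two direct constructions established in this section, splitting on the residue of $v$ modulo $6$. By Theorem \ref{STS}, an STS($v$) exists exactly when $v \equiv 1 \pmod 6$ or $v \equiv 3 \pmod 6$, so every admissible order lies in exactly one of the two families that Results \ref{OC1m6} and \ref{OC3m6} address, and no additional construction is required.

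Concretely, I would argue in two cases. If $v \equiv 1 \pmod 6$ and $v > 1$, then Result \ref{OC1m6} produces an STS($v$) (via the Skolem construction of Construction \ref{1m6}) together with a 1-ocycle on its blocks, which is precisely the desired conclusion. If instead $v \equiv 3 \pmod 6$ and $v > 3$, then Result \ref{OC3m6} produces an STS($v$) (via the Bose construction of Construction \ref{3m6}) equipped with a 1-ocycle. These two invocations settle every admissible $v$ with $v \geq 7$, and since the substantive combinatorial work has already been carried out in proving those two results, this step is essentially bookkeeping.

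What remains, and what I expect to be the only delicate point, is the handling of the two smallest orders, $v = 1$ and $v = 3$, which fall outside the stated ranges $n>1$ and $n>3$ of the two results. The order $v = 1$ is vacuous: the unique STS($1$) has no blocks, so the empty listing is trivially a 1-ocycle. The order $v = 3$ is the genuine obstacle, since the unique STS($3$) consists of a single block, and a lone triple cannot close into a 1-ocycle---its final point would have to coincide with its initial point. This is exactly the exclusion already recorded for $v = 3$ in Corollary \ref{CDewar} and Theorem \ref{Dewar}, so in the write-up I would either read the statement as applying for $v \neq 3$ or adopt the convention that a one-block system is listed trivially; in either reading, the full content of the theorem is delivered by the residue-class split above.
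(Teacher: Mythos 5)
Your proposal takes essentially the same approach as the paper: its proof of Theorem \ref{easymain} is exactly the residue-class split you describe, applying Result \ref{OC3m6} for $v \equiv 3 \pmod 6$ and Result \ref{OC1m6} for $v \equiv 1 \pmod 6$ (the paper restricts to $n \geq 7$ and says nothing more). Your extra paragraph on the small orders is a point of care the paper's proof silently omits, and your observation there is correct --- the single-block STS($3$) cannot close into a 1-ocycle, so the theorem as stated should be read as excluding $v = 3$, consistent with the exclusion already made in Theorem \ref{Dewar} and Corollary \ref{CDewar}.
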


\begin{proof}
	For $n \equiv 3 \pmod 6$ with $n \geq 7$, we apply Result \ref{OC3m6} to obtain the desired system.  For $n \equiv 1 \pmod 6$ with $n \geq 7$, we apply Result \ref{OC1m6} to obtain the desired system.
\end{proof}

\begin{cor}
	For every $n \geq 7$ with $n \equiv 1, 3\pmod 6$, there exists an STS$(n)$ with a rank two ucycle.
\end{cor}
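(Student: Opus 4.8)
The plan is to obtain this corollary as an immediate consequence of Theorem \ref{easymain} together with the correspondence, already noted in the introduction, between compressed 1-ocycles and rank two ucycles. There is no new combinatorial content to develop; the work is entirely in unwinding the two relevant definitions, so I would keep the argument to a few lines.

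First I would invoke Theorem \ref{easymain} to produce, for each $n \geq 7$ with $n \equiv 1,3 \pmod 6$, an STS($n$) that admits a 1-ocycle. (Concretely this 1-ocycle comes from Result \ref{OC3m6} when $n \equiv 3 \pmod 6$ and from Result \ref{OC1m6} when $n \equiv 1 \pmod 6$, but for the corollary it is enough that \emph{some} such system exists.) Next I would pass to the compressed form of this 1-ocycle: in a 1-ocycle each block $p_1p_2p_3$ overlaps its predecessor at $p_1$ and its successor at $p_3$, so $p_2$ is the unique hidden point, and deleting the hidden points leaves a cyclic sequence of the overlap points.

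Finally I would check that this compressed sequence is exactly a rank two ucycle. Each block is now recorded by just two of its points, namely its two overlap points, and since every pair of points of an STS lies in exactly one block, that pair determines the block uniquely; thus each block is legitimately represented at rank two. Moreover, reading the compressed sequence as a list of length-two strings, consecutive strings share precisely one point, which is the $(n-1)=1$ overlap demanded of a ucycle on strings of length two. Hence the compressed 1-ocycle is a rank two ucycle on the STS($n$), completing the proof.

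The only point requiring any care, and it is mild, is verifying that compression is well defined: each block must contribute exactly one hidden point and the two retained points must still identify the block. Both facts follow at once from the definition of a 1-ocycle and the defining property of an STS, so I do not anticipate any genuine obstacle here.
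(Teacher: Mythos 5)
Your proposal is correct and follows exactly the paper's argument: the paper likewise cites Theorem \ref{easymain} and observes that the 1-ocycle in compressed form is a rank two ucycle, with your proof merely spelling out the definitional unwinding (each block's middle point is hidden, and the two retained overlap points determine the block since every pair lies in exactly one triple) that the paper leaves implicit.
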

\begin{proof}
	Using Theorem \ref{easymain}, we construct an STS($n$) with a 1-ocycle.  The 1-ocycle in compressed form is a rank two ucycle.
\end{proof}

\newpage

\begin{appendix}

\section{Appendix}
Included in this appendix are the necessary base cases for Constructions \ref{C2v+1} and \ref{C2v+7}.
\\
\\
$\mathbf{v=19:}$  We use the AF design (from \cite{STS19}) and produce the 1-ocycle:
$$\begin{array}{l|l|l|l|l|l}
		1,2,3 	&  6,15,8	& 15,17,10	& 16,11,14	& 19,1,18	& 17,2,19 \\
		3,5,6 	&  8,1,9	& 10,5,14	& 14,17,7	& 18,2,16	& 19,14,8 \\
		6,2,4 	&  9,2,11	& 14,6,9	&  7,15,9	& 16,3,19	& 8,16,7 \\
		4,10,13 &  11,5,13	& 9,19,13	& 9,12,18	& 19,15,4	& 7,4,3 \\
		13,1,12 &  13,8,17	& 13,18,7	& 18,15,11	& 4,8,12	& 3,17,18 \\
		12,2,14 &  17,9,5	& 7,12,11	& 11,1,10	& 12,15,3	& 18,8,5 \\
		14,1,15 &  5,12,19	& 11,4,17	& 10,2,8	& 3,13,14	& 5,4,1 \\
		15,13,2 &  19,11,6	& 17,12,6	& 8,11,3	& 14,18,4 \\
		2,5,7 	&  6,13,16	& 6,18,10	& 3,9,10	& 4,9,16\\
		7,1,6 	&  16,5,15	& 10,12,16	& 10,7,19	& 16,1,17 
	\end{array}$$
\\
\\
  $\mathbf{v=21:}$  We use the AF design (from \cite{AFSTS}) to produce the 1-ocycle: 
$$\begin{array}{c|c|c|c|c|c|c}
				00, \infty_1 , 11  &  05 , \infty_1 , 16			& 12 , 14 , 08 		& 16 , 18 , 03  		& 01 , 10 , 15 & 05 , 10 , 14 & 05,03,04     \\
				11 , \infty_0 , 01 & 16 , \infty_0 , 06 			&  08 , \infty_2 , 11	&  03 , \infty_2 , 15	& 15 , 12 , 18 & 14 , 13 , 06 &  04, 01, 07   \\
				01 , \infty_1 , 12 & 06 , \infty_1 , 17 			&  11 , 13 , 17 		&  15 , 17 , 02		& 18 , 10 , 02 & 06 , 11 , 15 & 07, 08, 06   \\
				12 , \infty_0 , 02 & 17 , \infty_0 , 07 			&   07 , \infty_2 , 10	& 02 , \infty_2 , 14	& 02 ,11 , 16  & 15 , 14 , 07 & 06, 03, 00   \\
				02 , \infty_1 , 13 & 07 , \infty_1 , 18 			& 10 , 12 , 06		&  14 , 16 , 01		& 16 , 13 , 10 & 07 , 12 , 16 & 00, 04, 08     \\
				13 , \infty_0 , 03 & 18 , \infty_0 , 08 			& 06 , \infty_2 , 18	&  01 , \infty_2 , 13	& 10 , 11 , 03 & 16 , 15 , 08 & 08, 01, 03    \\
				03 , \infty_1 , 14 & 08 , \infty_1 , 10 			&  18 , 11 , 05		& 13 , 15 , 00	 	& 03 , 17 , 12 & 08 , 13 , 17 & 03, 07, 02       \\
				14 , \infty_0 , 04 & 10 , 00 , \infty_0 			&  05 , \infty_2 , 17	& 00 , 18 , 14 		& 12 , 11 , 04 & 17 , 16 , 00 & 02, 04, 06 \\
				04 , \infty_1 , 15 &  \infty_0 , \infty_1 , \infty_2 	&  17 , 10 , 04		& 14 , 11 , 17	 	& 04 , 18 , 13 & 00,01, 02    & 06, 01, 05  \\
				15 , \infty_0 , 05 & \infty_2 , 00 , 12			&  04 , \infty_2 , 16	& 17 , 18 , 01		& 13 , 12 , 05 & 02, 08, 05   & 05, 07, 00
			\end{array}$$
$\mathbf{v=25:}$  We use the AF design (from \cite{AFSTS}) to produce the 1-ocycle:
$$\begin{array}{c|c|c|c|c|c|c}
	\infty_1, 00, 11 & 18, \infty_0, 08	& 06, \infty_3, 10 & 04, \infty_5, 11 	& 07, 08, 06		& 13, 16, 14		& 14, \infty_6, 06  \\
	11, \infty_0, 01 & 08, \infty_1, 10	& 10, \infty_2, 07 &  11, \infty_4, 05	& 06, 03, 00		& 14, 17, 15		&  06, 01, 15 \\
	01, \infty_1, 12 & 10, \infty_0, 00	& 07, \infty_3, 11 &  05, \infty_5, 12	& 00, 04, 08		& 15, 18, 16		& 15, \infty_6, 07 \\
	12, \infty_0, 02 & 00, \infty_3, 13	& 11, \infty_2, 08 & 12, \infty_4, 06 	& 08, 01, 03		& 16, 10, 17		& 07, 02, 16 \\
	02, \infty_1, 13 & 13, \infty_2, 01	& 08, \infty_3, 12 &  06, \infty_5, 13	& 03, 07, 02		& 17, 11, 18		& 16, \infty_6, 08 \\
	13, \infty_0, 03 & 01, \infty_3, 14	& 12, \infty_2, 00 & 13, \infty_4, 07 	& 02, 04, 06		& 18, 12, 10 		& 08, 03, 17 \\
	03, \infty_1, 14 & 14, \infty_2, 02	& 00, \infty_5, 16 & 07, \infty_5, 14 	& 06, 01, 05		& 10, \infty_6, 02	& 17, 00, \infty_6  \\
	14, \infty_0, 04 & 02, \infty_3, 15 	& 16, \infty_4, 01 & 14, \infty_4, 08 	& 05, 07, 00		& 02, 06, 11		&  \infty_6, \infty_2, \infty_0 \\
	04, \infty_1, 15 & 15, \infty_2, 03	& 01, \infty_5, 17 &  08, \infty_5, 15	& 00, 04, 18		& 11, \infty_6, 03	& \infty_0, \infty_3, \infty_1 \\
	15, \infty_0, 05 & 03, \infty_3, 16 	& 17, \infty_4, 02 & 15, \infty_4, 00 	& 18, \infty_6, 01	& 03, 07, 12		&  \infty_1, \infty_4, \infty_2 \\
	05, \infty_1, 16 & 16, \infty_2, 04	& 02, \infty_5, 18 & 00, 01, 02 		& 01, 05, 10		& 12, \infty_6, 04	&  \infty_2, \infty_5, \infty_3 \\
	16, \infty_0, 06 & 04, \infty_3, 17	& 18, \infty_4, 03 &  02, 08, 05		& 10, 13, 11 		& 04, 08, 13		& \infty_3, \infty_6, \infty_4 \\
	06, \infty_1, 17 & 17, \infty_2, 05	& 03, \infty_5, 10 & 05, 03, 04	 	& 11, 14, 12		& 13, \infty_6, 05	& \infty_4, \infty_0, \infty_5 \\
	17, \infty_0, 07 & 05, \infty_3, 18 	& 10, \infty_4, 04 & 04, 01, 07	 	& 12, 15, 13		& 05, 00, 14		&  \infty_5, \infty_6, \infty_1 \\
	07, \infty_1, 18 & 18, \infty_2, 06	&
\end{array}$$
\\
\\
$\mathbf{v=27:}$  We use the AF design (from \cite{AFSTS}) to produce the 1-ocycle:
$$\hspace{-7mm}\begin{array}{c|c|c|c|c|c}
	00, \infty, 10 	& 19, 18, 12		& 07, \infty, 17		& 04, 00, 12		& 01, 06, 1(10)		& 0(11), 03, 17		  \\
	10, 0(12), 01 	& 12, 1(10), 16		& 17, 06, 08		& 12, 0(12), 05		& 1(10), 05, 02		& 17, 02, 0(12) 		  \\
	01, \infty, 11 	& 16, 15, 10		& 08, \infty, 18		& 05, 01, 13		& 02, 07, 1(11)		& 0(12), 04, 18 		 \\
	11, 12, 10 	& 10, 1(12), 1(11)	& 18, 07, 09		& 13, 00, 06		& 1(11), 06, 03		& 18, 03, 00		 \\
	10, 19, 1(10)	& 1(11), 12, 14		& 09, \infty, 19		& 06, 02, 14		& 03, 08, 1(12)		& 00, 07, 01 \\
	1(10), 1(12), 11	& 14, 16, 11		& 19, 08, 0(10)		& 14, 01, 07		& 1(12), 07, 04		& 01, 08, 02 \\
	11, 13, 15		& 11, 19, 17		& 0(10), \infty, 1(10)	& 07, 03, 15		& 04, 09, 10		& 02, 09, 03  \\
	15, 17, 1(11)	& 17, 10, 18		& 1(10), 09, 0(11)	& 15, 02, 08		& 10, 08, 05		& 03, 0(10), 04 \\
	1(11), 19, 16	& 18, 1(11), 11		& 0(11), \infty, 1(11)	&  08, 04, 16 		& 05, 0(10), 11		& 04, 0(11), 05 \\
	16, 17, 1(12)	& 11, 00, 02		& 1(11), 0(10), 0(12)	& 16, 03, 09		& 11, 09, 06		& 05, 0(12), 06 \\
	1(12), 18, 14 	& 02, \infty, 12		& 0(12), \infty, 1(12)	& 09, 05, 17		& 06, 0(11), 12		& 06, 00, 07 \\
	14, 19, 15 	& 12, 01, 03		& 1(12), 0(11), 00	& 17, 04, 0(10)		& 12, 0(10), 07		& 07, 01, 08 \\
	15, 1(10), 18	& 03, \infty, 13		& 00, 09, 1(11)		& 0(10), 06, 18		& 07, 0(12), 13		& 08, 02, 09 \\
	18, 16, 13		& 13, 02, 04		& 1(11), 08, 01		& 18, 05, 0(11)		& 13, 0(11), 08		& 09, 03, 0(10) \\
	13, 1(11), 1(10)	& 04, \infty, 14		& 01, 0(10), 1(12)	& 0(11), 07, 19		& 08, 00, 14		& 0(10), 04, 0(11) \\
	1(10), 17, 14	& 14, 03, 05 		& 1(12), 09, 02		& 19, 06, 0(12)		& 14, 0(12), 09		& 0(11), 05, 0(12) \\
	14, 10, 13		& 05, \infty, 15		& 02, 0(11), 10		& 0(12), 08, 1(10)	& 09, 01, 15		& 0(12), 06, 00 \\
	13, 17, 12		& 15, 04, 06 		& 10, 0(10), 03 		& 1(10), 07, 00		& 15, 00, 0(10)		&  \\
	12, 15, 1(12)	& 06, \infty, 16		& 03, 0(12), 11		& 00, 05, 19		& 0(10), 02, 16		&  \\
	1(12), 13, 19	& 16, 05, 07		& 11, 0(11), 04		& 19, 04, 01		& 16, 01, 0(11)		& 
\end{array}$$

$\mathbf{v=33:}$  We use the AF design (from \cite{AFSTS}) to produce the 1-ocycle:
$$\begin{array}{c|c|c|c}
	13, \infty_1, 02 		& 1(13), 16, 0(14) 		& 1(13), 11, 08		& 03, 09, 0(13) \\
	02, \infty_2, 15 		& 0(14), \infty_0, 1(14)	& 08, 15, 1(14)		& 0(13), 0(10), 04\\
	15, \infty_1, 04 		& 1(14), 14, 19			& 1(14), 12, 09		& 04, 0(14), 05\\
	04, \infty_2, 17 		& 19, 1(10), 0(14)		& 09, 16, 10		& 05, 02, 0(10) \\
	17, \infty_1, 06 		& 0(14), 18, 1(12)		& 10, 0(10), 13		& 0(10), 07, 0(14) \\
	06, \infty_2, 19 		& 1(12), 15, 0(13)		& 13, 1(10), 0(11)	& 0(14), 0(11), 06\\
	19, \infty_1, 08 		& 0(13), 17, 1(11)		& 0(11), 15, 19		& 06, 0(12), 0(10) \\
	08, \infty_2, 1(11) 	& 1(11), 14, 0(12)		& 19, 12, 0(10)		& 0(10), 03, 0(11) \\
	1(11), \infty_1, 0(10) 	& 0(12), 16, 1(10)		& 0(10), 14, 18		& 0(11), 09, 04 \\
	0(10), \infty_2, 1(13) 	& 1(10), \infty_0, 0(10)	& 18, 11, 09		& 04, 08, 0(12) \\
	1(13), \infty_1, 0(12) 	& 0(10), 17, 11			& 09, 13, 17		& 0(12), 09, 05 \\
	0(12), \infty_2, 10 	& 11, 14, 0(11)			& 17, 10, 08		& 05, 08, 0(13) \\
	10, \infty_1, 0(14) 	& 0(11), \infty_0, 1(11) 	& 08, 12, 16		& 0(13), 07, 06 \\
	0(14), \infty_2, 12 	& 1(11), 11, 16			& 16, 1(14), 07		& 06, 08, 09 \\
	12, \infty_1, 01 		& 16, 17, 0(11)			& 07, 11, 15		& 09, 0(14), 02 \\
	01, \infty_2, 14 		& 0(11), 18, 12			& 15, 1(13), 06		& 02, 1(11), 10 \\
	14, \infty_1, 03 		& 12, 15, 0(12)			& 06, 10, 14		& 10, 0(13), 12\\
	03, \infty_2, 16 		& 0(12), 19, 13			& 14, 1(12), 05		& 12, 00, 14 \\
	16, \infty_1, 05 		& 13, 16, 0(13)			& 05, 1(14), 13		& 14, 02, 16 \\
	05, \infty_2, 18 		& 0(13), 1(10), 14		& 13, 1(11), 04		& 16, 04, 18 \\
	18, \infty_1, 07 		& 14, 17, 0(14)			& 04, 1(13), 12		& 18, 06, 1(10) \\
	07, \infty_2, 1(10) 	& 0(14), 1(11), 15		& 12, 1(10), 03		& 1(10), 08, 1(12) \\
	1(10), \infty_1, 09 	& 15, 18, 00			& 03, 1(12), 11		& 1(12), 1(14), 0(10) \\
	09, \infty_2, 1(12) 	& 00, 1(12), 16			& 11, 19, 02		& 0(10), 15, 16 \\
	1(12), \infty_1, 0(11) 	& 16, 19, 01			& 02, 01, 00		& 16, \infty_0, 06 \\
	0(11), \infty_2, 1(14) 	& 01, 1(13), 17			& 00, 0(10), 09		& 06, 11, 12 \\
	1(14), \infty_1, 0(13) 	& 17, 1(10), 02			& 09, 07, 01		& 12, \infty_0, 02 \\
	0(13), \infty_2, 11 	& 02, 1(14), 18			& 01, 05, 03		& 02, 1(12), 1(13) \\
	11,00, \infty_1 		& 18, 1(11), 03			& 03, 00, 04		& 1(13), \infty_0, 0(13) \\
	\infty_1, \infty_0, \infty_2 & 03, \infty_0, 13		& 04, 06, 01		& 0(13), 18, 19 \\
	\infty_2, 00,13		& 13, 18, 1(13)			& 01, 0(10), 08		& 19, \infty_0, 09 \\
	13, 01, 15			& 1(13), 1(14), 03		& 08, 00, 07		& 09, 14, 15 \\
	15, 03, 17			& 03, 10, 19			& 07, 03, 0(12)		& 15, \infty_0, 05 \\
	17, 05, 19			& 19, 1(12), 04			& 0(12), 0(14), 01	& 05, 10, 11 \\
	19, 07, 1(11)		& 04, 11, 1(10)			& 01, 0(13), 0(11)	& 11, \infty_0, 01 \\
	1(11), 09, 1(13)		& 1(10), 1(13), 05		& 0(11), 08, 02		& 01, 1(11), 1(12) \\
	1(13), 0(11), 10		& 05, 12, 1(11)			& 02, 03, 06		& 1(12), \infty_0, 0(12) \\
	10, 19, 01			& 1(11), 1(14), 06		& 06, 00, 05		& 0(12), 17, 18 \\
	01, 1(10), 1(14)		& 06, 13, 1(12)			& 05, 0(11), 07		& 18, \infty_0, 08 \\
	1(14), 17, 00		& 1(12), 10, 07			& 07, 04, 02		& 08, 13, 14 \\
	00, \infty_0, 10		& 07, \infty_0, 17		& 02, 0(13), 0(12)	& 14, \infty_0, 04 \\
	10, 15, 1(10)		& 17, 1(12), 12			& 0(12), 0(11), 00	& 04, 10, 1(14) \\
	1(10), 1(11), 00		& 12, 13, 07			& 00, 0(14), 0(13)	& 1(14), 0(12), 11\\
	00, 19, 1(13) 		& 07, 14, 1(13)			& 0(13), 08, 03 		& 11, 0(14), 13
\end{array}$$

\end{appendix}

\end{document}